\documentclass[a4paper, 10pt, english]{article}
\usepackage{lscape}
\usepackage{pdflscape}

\usepackage{stmaryrd}

\usepackage{cmll}

\usepackage{comment}

\usepackage{amscd}
\usepackage{amssymb,amsmath,amsthm}
\usepackage{mathptmx}
\usepackage{mathrsfs}
\usepackage{color}
\usepackage{xspace}
\usepackage{bussproofs}
\EnableBpAbbreviations

\usepackage{tikz}

\usepackage{txfonts}

\usepackage{centernot}

\usepackage{mathtools}

\usepackage{relsize}

\usepackage{multirow}

\usepackage{multicol}

\usepackage{array}
\newcolumntype{C}[1]{>{\centering\arraybackslash}p{#1}}
\newcolumntype{L}[1]{>{\arraybackslash}p{#1}}

\usepackage{cleveref}

\usetikzlibrary{arrows}
\usetikzlibrary{matrix}
\usetikzlibrary{patterns}
\usetikzlibrary{shapes}
\newcolumntype{C}[1]{>{\centering\arraybackslash}p{#1}}
\newcolumntype{L}[1]{>{\arraybackslash}p{#1}}
\DeclareMathSymbol{\Gamma}{\mathalpha}{operators}{0}
\DeclareMathSymbol{\Delta}{\mathalpha}{operators}{1}
\DeclareMathSymbol{\Theta}{\mathalpha}{operators}{2}
\DeclareMathSymbol{\Lambda}{\mathalpha}{operators}{3}
\DeclareMathSymbol{\Xi}{\mathalpha}{operators}{4}
\DeclareMathSymbol{\Pi}{\mathalpha}{operators}{5}
\DeclareMathSymbol{\Sigma}{\mathalpha}{operators}{6}
\DeclareMathSymbol{\Upsilon}{\mathalpha}{operators}{7}
\DeclareMathSymbol{\Phi}{\mathalpha}{operators}{8}
\DeclareMathSymbol{\Psi}{\mathalpha}{operators}{9}
\DeclareMathSymbol{\Omega}{\mathalpha}{operators}{10}
\newtheorem{theorem}{Theorem}
\newtheorem{proposition}{Proposition}
\newtheorem{lemma}{Lemma}

\newtheorem{definition}{Definition}

\def\mc{\multicolumn}


\def\fCenter{{\mbox{$\ \vdash\ $}}}



\newcommand{\fns}{\footnotesize}

\def\mc{\multicolumn}

\newcommand{\xtop}{\ensuremath{\top}\xspace}
\newcommand{\xbot}{\ensuremath{\bot}\xspace}
\newcommand{\xand}{\ensuremath{\wedge}\xspace}
\newcommand{\xor}{\ensuremath{\vee}\xspace}
\newcommand{\xneg}{\ensuremath{\neg}\xspace}
\newcommand{\xrarr}{\ensuremath{\rightarrow}\xspace}
\newcommand{\xcrarr}{\ensuremath{\,{>\mkern-7mu\raisebox{-0.065ex}{\rule[0.5865ex]{1.38ex}{0.1ex}}}\,}\xspace}

\newcommand{\XTOP}{\ensuremath{\hat{\top}}\xspace}
\newcommand{\XBOT}{\ensuremath{\check{\bot}}\xspace}
\newcommand{\XAND}{\ensuremath{\:\hat{\wedge}\:}\xspace}
\newcommand{\XOR}{\ensuremath{\:\check{\vee}\:}\xspace}
\newcommand{\XNEG}{\ensuremath{\:\tilde{\neg}\:}\xspace}
\newcommand{\XRARR}{\ensuremath{\:\check{\rightarrow}\:}\xspace}
\newcommand{\XCRARR}{\ensuremath{\hat{{\:{>\mkern-7mu\raisebox{-0.065ex}{\rule[0.5865ex]{1.38ex}{0.1ex}}}\:}}}\xspace}

\newcommand{\Ineg}{\ensuremath{{\sim}}\,}

\newcommand{\Itop}{\ensuremath{1_\mathrm{I}}\xspace}
\newcommand{\Ibot}{\ensuremath{0_\mathrm{I}}\xspace}

\newcommand{\Iand}{\ensuremath{\cap}\xspace}
\newcommand{\Ior}{\ensuremath{\cup}\xspace}
\newcommand{\Irarr}{\ensuremath{\,{\raisebox{-0.065ex}{\rule[0.5865ex]{1.38ex}{0.1ex}}\mkern-5mu\supset}\,}\xspace}
\newcommand{\Icrarr}{\ensuremath{\,{\supset\mkern-5.5mu\raisebox{-0.065ex}{\rule[0.5865ex]{1.38ex}{0.1ex}}}\,}\xspace}

\newcommand{\INEG}{\,\ensuremath{\sim}\,\,}

\newcommand{\ITOP}{\ensuremath{\hat{1}_\mathrm{I}}\xspace}
\newcommand{\IBOT}{\ensuremath{\check{0}_\mathrm{I}}\xspace}
\newcommand{\IAND}{\ensuremath{\:\hat{\cap}\:}\xspace}
\newcommand{\IOR}{\ensuremath{\:\check{\cup}\:}\xspace}
\newcommand{\IRARR}{\ensuremath{\check{\,{\raisebox{-0.065ex}{\rule[0.5865ex]{1.38ex}{0.1ex}}\mkern-5mu\supset}\,}}\xspace}
\newcommand{\ICRARR}{\ensuremath{\hat{\,{\supset\mkern-5.5mu\raisebox{-0.065ex}{\rule[0.5865ex]{1.38ex}{0.1ex}}}\,}}\xspace}

\newcommand{\Cneg}{\,\ensuremath{-}\,\,}
\newcommand{\Ctop}{\ensuremath{1}_\mathrm{C}\,\xspace}
\newcommand{\Cbot}{\ensuremath{0}_\mathrm{C}\,\xspace}
\newcommand{\Cand}{\ensuremath{\sqcap}\xspace}
\newcommand{\Cor}{\ensuremath{\sqcup}\xspace}
\newcommand{\Crarr}{\ensuremath{\,{\raisebox{-0.065ex}{\rule[0.5865ex]{1.38ex}{0.1ex}}\mkern-5mu\sqsupset}\,}\xspace}
\newcommand{\Ccrarr}{\ensuremath{\,{\sqsupset\mkern-5.5mu\raisebox{-0.065ex}{\rule[0.5865ex]{1.38ex}{0.1ex}}}\,}\xspace}

\newcommand{\CNEG}{\,\ensuremath{\:\tilde{-}}\,\,}

\newcommand{\CTOP}{\ensuremath{\hat{1}_\mathrm{C}}\xspace}
\newcommand{\CBOT}{\ensuremath{\check{0}_\mathrm{C}}\xspace}
\newcommand{\CAND}{\ensuremath{\:\hat{\sqcap}\:}\xspace}
\newcommand{\COR}{\ensuremath{\:\check{\sqcup}\:}\xspace}
\newcommand{\CRARR}{\ensuremath{\check{\,{\raisebox{-0.065ex}{\rule[0.5865ex]{1.38ex}{0.1ex}}\mkern-5mu\sqsupset}\,}}\xspace}
\newcommand{\CCRARR}{\ensuremath{\hat{\,{\sqsupset\mkern-5.5mu\raisebox{-0.065ex}{\rule[0.5865ex]{1.38ex}{0.1ex}}}\,}}\xspace}

\newcommand{\wbox}{\ensuremath{\circ}_C\xspace}
\newcommand{\wdia}{\ensuremath{\circ}_I\xspace}

\newcommand{\bboxr}{\ensuremath{\blacksquare}_I\,\xspace}
\newcommand{\bboxl}{\ensuremath{\Diamondblack}_I\,\xspace}
\newcommand{\bdiar}{\ensuremath{\blacksquare}_C\,\xspace}
\newcommand{\bdial}{\ensuremath{\Diamondblack}_C\,\xspace}
\newcommand{\WBOX}{\ensuremath{\tilde{\circ}_C\,}\xspace}

\newcommand{\WDIA}{\ensuremath{\:\tilde{\circ}_I\,}\xspace}
\newcommand{\BBOXR}{\ensuremath{\:\check{\blacksquare}_I\,}\xspace}
\newcommand{\BBOXL}{\ensuremath{\:\hat{\Diamondblack}_I\,}\xspace}
\newcommand{\BDIAL}{\ensuremath{\:\hat{\Diamondblack}_C\,}\xspace}
\newcommand{\BDIAR}{\ensuremath{\:\check{\blacksquare}_C\,}\xspace}

\usepackage{authblk}
\EnableBpAbbreviations
\def\fCenter{{\mbox{$\ \vdash\ $}}}

\makeindex

\title{Proper Multi-Type Display Calculi for Rough Algebras}

 \author[1]{Giuseppe Greco}
   \author[2]{Fei Liang}
   \author[3]{Krishna Manoorkar}
   \author[2,4]{Alessandra Palmigiano\thanks{The research of the second and fourth author is supported by the NWO Vidi grant 016.138.314, the NWO Aspasia grant 015.008.054, and a Delft Technology Fellowship awarded to the second author in 2013.}}
  \affil[1]{Utrecht University, the Netherlands}
  \affil[2]{Delft University of Technology, the Netherlands}
  \affil[3]{Indian Institute of Technology, India}
  \affil[4]{University of Johannesburg, South Africa}
    \date{}
\begin{document}
\maketitle

    \begin {abstract}
    In the present paper, we endow the logics of  topological quasi Boolean algebras, topological quasi Boolean algebras 5, intermediate algebras of types 1-3, and pre-rough algebras with proper multi-type display calculi which are sound, complete, conservative, and enjoy cut
elimination and subformula property. Our proposal builds on an algebraic analysis and  applies the principles of the multi-type methodology
in the design of display calculi.
  \end{abstract}

\section{Introduction}\label{sec:Introduction}
Rough algebras and related structures arise in tight connection with formal models of imperfect information \cite{pawlak1982rough}, and have been investigated for more than twenty years using techniques from universal algebra and algebraic logic, giving rise to a rich theory (cf.~e.g.~\cite{banerjee1996rough,iwinski1987algebraic,comer1995perfect,saha2014algebraic,saha2016algebraic}). Sound and complete sequent calculi have been introduced for the logics naturally associated with some of these classes of algebras \cite{saha2014algebraic,saha2016algebraic}. However, the cut rule in these calculi is not eliminable. Very recently, sequent calculi with cut elimination and a non-standard version of subformula property have been introduced in \cite{MinChaZhe18} for some of these logics, but not for the logic of the so-called {\em intermediate algebras of type 3} (cf.~\cite{saha2014algebraic}, Definition 2.11). In these calculi, the subformula property is non-standard because each logical connective has four introduction rules, two of which are non-standard and introduce the given logical connective under the scope of negation. 

In the present paper, we introduce a family of proper display calculi for the logics associated with the classes of `{\em rough algebras}'\footnote{Although the name `rough algebras' has a specific meaning  in this literature (reported in Definition \ref{def:dm}), in the present paper we find it convenient to use it as the generic name for the class of topological quasi Boolean algebras and its subclasses.} discussed in \cite{saha2014algebraic}; namely, topological quasi Boolean algebras (tqBa), topological quasi Boolean algebras 5 (tqBa5), intermediate algebras of types 1-3 (IA1, IA2, IA3), and pre-rough algebras (pra), cf.~Definition \ref{def:dm}. 

Our methodology is very akin to the spirit of \cite{banerjee1996rough}, is driven by algebraic considerations, and is grounded on the general results and insights of the theory of {\em multi-type calculi}, introduced in \cite{Multitype,PDL,TrendsXIII} and motivated by \cite{GKPLori,ProofTheoreticDEL}. This theory has proven effective in endowing many well known but proof-theoretically challenging logical systems (cf.~e.g.~\cite{linearlogPdisplayed,greco2017multi,GrecoPalmigianoLatticeLogic,bilattice,fei2018,apostolos2018,Inquisitive}) with sequent calculi enjoying the  excellent properties mentioned in the abstract, which hold in full uniformity and are guaranteed by the general theory. This theory modularly covers also a wide class of axiomatic extensions of given logics \cite{greco2016unified}, and therefore has provided a powerful and flexible algebraic and proof-theoretic environment for the design of new families of logics of agency and coordination (cf.~\cite{BGPTW}), which introduces novel applications of  non-classical logics  to formalization problems in different fields, such as the social sciences. 

The first contribution of the present paper is an equivalent presentation of rough algebras, based on so-called {\em heterogeneous algebras} \cite{birkhoff1970heterogeneous}. Intuitively, heterogeneous algebras are algebras with more than one domain, and their operations might span across different domains. The classes of heterogeneous algebras corresponding to rough algebras have three domains, respectively corresponding to (abstract representations of) {\em general} sets and upper and lower  {\em definable} sets of an approximation space. Each of these three domains corresponds to a  distinct  {\em type}.  The modal operators capturing the lower and upper definable approximations of a general set are then modeled as heterogeneous maps from the general type to one of  the two definable types. The  equivalent heterogeneous presentations of rough algebras come naturally equipped with a multi-type logical language, and are characterized by   axiomatizations which can be readily recognized to be {\em analytic inductive} (cf.~\cite[Definition 55]{greco2016unified}), and hence, by the general theory of multi-type calculi,  can be effectively captured by  {\em proper} multi-type display calculi which are sound, complete, conservative, and enjoy cut elimination and  {\em standard} subformula property, given that the introduction rules for all connectives are standard. The introduction of these calculi is the second contribution of the present paper. 

Compared with \cite{MinChaZhe18}, the multi-type methodology allows for more modularity, which not only has made it possible to account for the logic of IA3, but will also make it possible to extend the present theory so as to cover  weaker versions of rough algebras based on e.g.~semi De Morgan algebras \cite{greco2017multi}, or even general lattices \cite{KriPalNac18}, which will account for the proof-theoretic aspects of the logics of rough concepts.
\section{Preliminaries}

\subsection{Varieties of rough algebras}\label{ssec: varieties}

\begin{definition}(cf.~Section 2 \cite{saha2016algebraic})\label{def:dm}
$\mathbb{T} = (\mathbb{L},  I)$ is a {\em topological quasi-Boolean algebra} (tqBa) if $\mathbb{L} = (\mathrm{L}, \lor,\land,\neg,  \top, \bot)$ is a De Morgan algebra and for all $a, b \in \mathrm{L}$,
 \vspace{-0.3em}
\begin{center}
\begin{tabular}{lll l}
 T1. $I(a \land b)= Ia \land Ib$, &T2. $IIa = Ia$, & T3. $Ia \leq a$, &T4. $I \top = \top$. \\
\end{tabular}
\end{center}
For any $a\in \mathbb{T}$, let  $Ca: =  \neg I\neg a$. We consider the subclasses of tqBas defined as in the following table.
\begin{center}
\begin{tabular}{|c|c|c|}
\hline
Algebras & Acronyms & Axioms\\
\hline
{\em topological quasi Boolean algebra 5} & tqBa5 & T5: $CIa = Ia$\\
 \hline
{\em intermediate algebra of type 1} & IA1 & T5, T6: $Ia \lor \neg Ia = \top$\\
 \hline
 {\em intermediate algebra of type 2} &IA2 & T5, T7: $Ia \lor Ib = I(a \lor b)$\\
 \hline
 {\em intermediate algebra of type 3}  &IA3 &T5, T8: $Ia \leq Ib$  and $Ca \leq Cb$ imply $a \leq b$\\
 \hline
 {\em pre-rough algebra}  &pra & T5, T6, T7, T8. \\
 \hline
 \end{tabular}
 \end{center}
 
A  {\em rough algebra} is a complete and completely distributive pre-rough algebra.

\begin{center}
\begin{tikzpicture}[node/.style={circle, draw, fill=black}, scale=1]
\node (IA2) at (0,-0.5) {${IA2}$};
\node (IA1) at (-1,-0.5) {${IA1}$};
\node (IA3) at (1,-0.5) {${IA3}$};
\node (5) at (0,0.5) {${tqBa5}$};
\node (tq) at (0,1.5) {${tqBa}$};
\node (pre) at (0,-1.5) {${pre}$-${rough}$};
\node (rough) at (0,-2.5) {${rough}$};
\draw [->] (tq)  to (5);
\draw [->] (5) to  (IA1);
\draw [->] (5) to  (IA2);
\draw [->] (5) to  (IA3);
\draw [->] (IA1) to  (pre);
\draw [->] (IA2) to  (pre);
\draw [->] (IA3) to  (pre);
\draw [->] (pre) to  (rough);
\end{tikzpicture}
\end{center}
\end{definition}

\begin{lemma}
\label{cor:interior-closure}
Any tqBa $\mathbb{T} = (\mathrm{L}, \lor,\land,\neg, I, \top, \bot)$ satisfies the following equalities:
\begin{center}
\begin{tabular}{lll}
(i)\ $I(Ia \lor Ib) = Ia \lor Ib$ &$\quad\quad$ & (ii) $C(Ca \land Cb) = Ca \land Cb$.\\
\end{tabular}
\end{center}
\end{lemma}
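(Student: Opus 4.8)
The plan is to derive (i) directly from the basic order-theoretic behaviour of $I$, and then to obtain (ii) from (i) by De Morgan duality rather than repeating a dual argument. First I would record the one fact about $I$ that is not listed explicitly among T1--T4, namely that $I$ is monotone: if $a \leq b$, then $a = a \land b$, so by T1 we get $Ia = I(a \land b) = Ia \land Ib$, whence $Ia \leq Ib$. Together with the idempotence T2 and the contraction T3, this is all that (i) needs.

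For (i), the inequality $I(Ia \lor Ib) \leq Ia \lor Ib$ is immediate by applying T3 to the element $Ia \lor Ib$. For the converse, note that $Ia \leq Ia \lor Ib$; applying the monotone map $I$ and then using T2 yields $Ia = IIa \leq I(Ia \lor Ib)$, and symmetrically $Ib \leq I(Ia \lor Ib)$. Taking the join of these two gives $Ia \lor Ib \leq I(Ia \lor Ib)$, and the two inequalities together establish (i).

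For (ii), I would avoid reproving the dual properties of $C$ and instead reduce to (i). Unfolding the abbreviation $Ca := \neg I \neg a$ and using involutivity of the De Morgan negation, one has $\neg Ca = \neg\neg I \neg a = I\neg a$, and likewise $\neg Cb = I\neg b$; hence by the De Morgan laws
\[
C(Ca \land Cb) = \neg I\neg(Ca \land Cb) = \neg I(I\neg a \lor I\neg b).
\]
Now applying (i) to $\neg a$ and $\neg b$ gives $I(I\neg a \lor I\neg b) = I\neg a \lor I\neg b$, and one further application of De Morgan returns $\neg(I\neg a \lor I\neg b) = \neg I\neg a \land \neg I\neg b = Ca \land Cb$, which is exactly (ii).

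\textbf{Main obstacle.} There is no deep difficulty here; the statement is essentially the observation that joins of $I$-fixpoints are again $I$-fixpoints, dualised. The only points requiring attention are that monotonicity of $I$ must be \emph{extracted} from the meet-preservation axiom T1 rather than assumed, and that the De Morgan bookkeeping in the reduction of (ii) to (i) (in particular the use of $\neg\neg a = a$ to pass between $I$ and $C$) be carried out carefully.
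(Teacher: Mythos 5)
Your proof is correct and follows essentially the same route as the paper's: monotonicity of $I$ is extracted from T1, one direction of (i) is T3, and the other combines monotonicity with T2. The paper dismisses (ii) with ``the proof is dual''; your explicit reduction of (ii) to (i) via $C = \neg I \neg$ and the De Morgan laws is just a careful spelling-out of that same duality.
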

\begin{proof}
(i) $I(Ia \lor Ib) \leq Ia \lor Ib$ is a straightforward consequence of T3. As to the converse direction, it is enough to show that  $Ia \leq I(Ia \lor Ib)$ and $Ib \leq I(Ia \lor Ib)$. Let us show the first of these inequalities. From T1 it immediately follows that $I$ is monotone. Hence $Ia \leq Ia \lor Ib$ implies $IIa \leq I(Ia \lor Ib)$. Hence, by T2, $Ia \leq IIa\leq I(Ia \lor Ib)$. Analogously one proves $Ib \leq I(Ia \lor Ib)$. The proof for (ii) is dual.
\end{proof}
In what follows, we use the abbreviated names of the algebras written in ``blackboard bold''  (e.g.~$\mathbb{TQBA}$, etc.) to indicate their corresponding classes. When it is unambiguous, we will use {\em rough algebras} as the generic name for these classes.

\subsection{The logics of rough algebras}\label{ssec:logics}
Fix a denumerable set $\mathsf{Atprop}$ of propositional variables, let $p$ denote an element in $\mathsf{Atprop}$. The logics of rough algebras share the language $\mathcal{L}$ which is defined recursively as follows:
$$A ::= p \mid \top \mid \bot \mid \neg A \mid IA \mid CA \mid A \land A \mid A \lor A.$$

\begin{definition}
The logic $\mathrm{H.TQBA}$ of the class  $\mathbb{TQBA}$ is defined by adding the following axioms to  De Morgan logic:
$$IA \fCenter A, \quad IA \fCenter IIA, \quad I(A \wedge B) \fCenter IA \wedge IB, \quad IA \wedge IB \fCenter I(A \wedge B), \quad \top \fCenter I\top$$
$$CA \fCenter \neg I \neg A, \quad \neg I \neg A \fCenter CA, \quad  \neg C \neg A \fCenter IA, \quad IA \fCenter \neg C \neg A.$$ We consider the following extensions of $\mathrm{H.TQBA}$ corresponding to the subclasses of $\mathbb{TQBA}$ reported above:

\begin{center}
\begin{tabular}{|c|c|p{5cm}|}
\hline
Class of algebras &name of logic &  Axioms/Rules\\
\hline
$\mathbb{TQBA}5$& $\mathrm{H.TQBA5}$ &1: $CIA \fCenter IA$\\
 \hline
$\mathbb{IA}1$&$\mathrm{H.IA1}$&1, 2: $ \top \fCenter IA \vee \neg IA$\\
 \hline
$\mathbb{IA}2$&$\mathrm{H.IA2}$&1, 3: $I(A \vee B) \fCenter IA$ \\
 \hline
 $\mathbb{IA}3$&$\mathrm{H.IA3}$&
 \rule{0pt}{4ex}\noindent 1, 4:
\AX $IA \fCenter IB $
\AX $CA \fCenter CB$
\BI  $A \fCenter B $
\DP

\\
 \hline
 $\mathbb{PRA}$ &$\mathrm{H.PRA}$ &1, 2, 3, 4\\
 \hline
 \end{tabular}
 \end{center}
\end{definition}
Let $\mathrm{H}$ denote any of the logics in the table above (second column), and $\mathbb{A}$ denote its corresponding class of algebras in the table above (first column, same row as $\mathrm{H}$). 
\begin{theorem}[Completeness]\label{completeness:H.SDM}
$\mathrm{H}$ is sound and complete with respect to $\mathbb{A}$, that is, if  $A \vdash B$ is an $\mathcal{L}$-sequent, then $A\vdash B$ is  derivable in $\mathrm{H}$ iff $h(A)\leq h(B)$ for all $\mathbb{T}\in \mathbb{A}$ and every interpretation $h:\mathcal{L}\to \mathbb{T}$.
\end{theorem}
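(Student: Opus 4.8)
The plan is to prove the two implications separately, treating soundness by a routine induction and completeness by the Lindenbaum--Tarski method. For soundness I would argue by induction on the length of derivations in $\mathrm{H}$, so that it suffices to check that each axiom of $\mathrm{H}$ is valid in every $\mathbb{T} \in \mathbb{A}$ and that each rule preserves validity. The De Morgan base is the known soundness of De Morgan logic with respect to De Morgan algebras. Each of the remaining axioms is by design the sequent translation of one of the defining (in)equalities of the relevant variety: $IA \vdash A$, $IA \vdash IIA$, the pair $I(A \wedge B) \vdash IA \wedge IB$ and $IA \wedge IB \vdash I(A \wedge B)$, and $\top \vdash I\top$ render T3, T2, T1 and T4, while the four axioms relating $C$, $I$ and $\neg$ express the definition $Ca := \neg I \neg a$. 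For the extensions, axioms 1--3 translate T5, T6 and T7, and the two-premise rule 4 translates the quasi-inequality T8: under any interpretation $h$ its premises read $I h(A) \leq I h(B)$ and $C h(A) \leq C h(B)$, whence T8 gives $h(A) \leq h(B)$.

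For completeness I would construct the Lindenbaum--Tarski algebra of $\mathrm{H}$. Set $A \equiv B$ iff both $A \vdash B$ and $B \vdash A$ are derivable; using that the connectives are congruential in $\mathrm{H}$ (monotonicity of $I$ is derivable from the T1-axioms, cf.\ the proof of Lemma~\ref{cor:interior-closure}, and that of $C$ then follows via $\neg$), $\equiv$ is a congruence, so the quotient $\mathrm{L}_{\mathrm{H}} := \mathcal{L}/{\equiv}$ carries well-defined operations induced by $\vee, \wedge, \neg, I, C, \top, \bot$ and is partially ordered by $[A] \leq [B]$ iff $A \vdash B$ is derivable. The canonical valuation $p \mapsto [p]$ extends to the interpretation $h_0 \colon \mathcal{L} \to \mathrm{L}_{\mathrm{H}}$, $h_0(A) = [A]$, for which $A \vdash B$ is derivable iff $h_0(A) \leq h_0(B)$ holds by definition. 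Hence, once $\mathrm{L}_{\mathrm{H}} \in \mathbb{A}$ is established, non-derivability of $A \vdash B$ is witnessed by $h_0$ in $\mathrm{L}_{\mathrm{H}}$, which is the contrapositive of completeness.

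The crux is therefore to verify $\mathrm{L}_{\mathrm{H}} \in \mathbb{A}$, i.e.\ that the quotient satisfies all the defining conditions of the variety corresponding to $\mathrm{H}$. That $\mathrm{L}_{\mathrm{H}}$ is a De Morgan algebra is the standard argument; T1--T4 hold because the associated axioms of $\mathrm{H.TQBA}$ are derivable, so the required (in)equalities between equivalence classes are available. Here one must also check that the algebraically derived operation $\neg I \neg$ agrees with the class of $C$, which is precisely what $CA \vdash \neg I \neg A$ and $\neg I \neg A \vdash CA$ guarantee (the remaining two linking axioms being then derivable via double negation). For each extension, the extra defining condition is recovered in the same manner from axioms 1--3; the IA3 case is the only one needing a rule rather than an axiom, and since every element of $\mathrm{L}_{\mathrm{H}}$ has the form $[A]$, rule 4 delivers exactly the quasi-inequality T8 on the quotient.

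I expect the principal difficulty to be bookkeeping rather than conceptual: confirming that $\equiv$ is a congruence for $I$ and $C$ so that the induced operations are well defined, and checking that the correspondence between sequent axioms and algebraic (in)equalities is faithful in both directions, with special attention to the $C$/$I$/$\neg$ interaction and to the quasi-equational nature of T8 in the IA3 and pra cases.
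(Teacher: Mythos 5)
The paper states Theorem~\ref{completeness:H.SDM} without proof, treating it as a standard fact inherited from the algebraizability of these logics (cf.\ the cited literature on rough algebras), so there is no in-paper argument to compare against; your soundness-by-induction plus Lindenbaum--Tarski quotient is exactly the standard route such a proof would take, and the overall structure is correct. One point deserves more care than you give it: you claim that monotonicity of $I$ (hence congruentiality, hence well-definedness of the induced operation on $\mathcal{L}/{\equiv}$) is \emph{derivable} from the T1-axioms, citing the argument in Lemma~\ref{cor:interior-closure}. That argument works in the algebra because there $I$ is a function and one may freely substitute $a\wedge b$ for the argument of $I$; its syntactic analogue requires passing from $A \dashv\vdash A\wedge B$ to $IA \dashv\vdash I(A\wedge B)$, which is precisely the congruentiality you are trying to establish --- the axioms $I(A\wedge B)\dashv\vdash IA\wedge IB$ alone do not break this circle. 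The standard resolution, adopted in the axiomatizations of pre-rough logic that the paper builds on, is to include the rule ``from $A\vdash B$ infer $IA\vdash IB$'' as primitive; with that rule (and its $C$-counterpart, obtainable via $\neg$ and the linking axioms) every step of your argument goes through, including the recovery of the quasi-inequality T8 on the quotient from rule~4. So the gap is really an under-specification in the paper's definition of $\mathrm{H}$ that your proof silently papers over; flag the primitive monotonicity rule explicitly and the proposal is complete.
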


\section{Towards a multi-type presentation: algebraic analysis}
In this section, we   equivalently represent rough algebras as heterogeneous algebras.
\subsection{The kernels of algebras}\label{ssec:kernel}
For any tqBa $\mathbb{T}$ (cf.~Definition \ref{def:dm}),  we let $\mathrm{K}_I:= \{ Ia \mid a \in \mathrm{L} \}$ and $\mathrm{K}_C:=\{ Ca  \mid a \in \mathrm{L}\}$, and let $\iota: \mathrm{L} \rightarrow \mathrm{K}_I$ and $\gamma: \mathrm{L} \rightarrow  \mathrm{K}_C$ be defined by the assignments  $a\mapsto Ia$ and $a \mapsto Ca$, respectively. Let $e_I:  \mathrm{K}_I \hookrightarrow \mathrm{L}$ and $e_C:  \mathrm{K}_C \hookrightarrow \mathrm{L}$ denote the natural embeddings.
Axioms T1, T2, and T3 imply that $I: \mathrm{L} \rightarrow \mathrm{L}$ is an interior operator and $C:  \mathrm{L} \rightarrow \mathrm{L}$ is a closure operator on $\mathrm{L}$ seen as a poset. Hence, by general order-theoretic facts (cf.~\cite[Chapter 7]{davey2002introduction}),  $e_I$ (resp.~$e_C$) is the left (resp.~right) adjoint of $\iota$ (resp.~$\gamma$), in symbols: $e_I \dashv \iota$ and $\gamma \dashv e_C$, i.e.~for any $\alpha\in \mathrm{K}_I$, $\xi\in \mathrm{K}_C$ and $a\in \mathrm{L}$,

\begin{equation}\label{eq:adjuction}
e_I(\alpha)\leq a \quad\mbox{ iff }\quad \alpha\leq \iota(a)\quad\quad\quad\quad \gamma(\xi)\leq a \quad\mbox{ iff }\quad \xi\leq e_C(a).
\end{equation}
The following equations are  straightforward consequences of the definitions of the maps and \eqref{eq:adjuction}:
\begin{equation}\label{eq:Interior}
 \iota(e_I(\alpha)) = \alpha \quad\quad   e_I(\iota(a)) = Ia\quad\quad\quad\quad
\gamma(e_C(\xi)) = \xi\quad \quad e_C(\gamma(a)) = Ca.
\end{equation}
The following lemma is a straightforward consequence of the definitions.

\begin{definition}
\label{def:kernel}
For any tqBa $\mathbb{T} = (\mathrm{L}, \lor,\land, I, \neg, \top, \bot)$, the {\em left-kernel} $\mathbb{K}_{\mathbb{I}}= (\mathrm{K}_I, \cup, \cap,  1_\mathbb{I}, 0_\mathbb{I})$  and the {\em right-kernel} $\mathbb{K}_{\mathbb{C}} = (\mathrm{K}_C, \sqcup, \sqcap,  1_\mathbb{C}, 0_\mathbb{C})$ are such that, for all $\alpha, \beta\in K_I$, and all $\xi, \chi\in K_C$,
\begin{center}
\begin{tabular}{llcll}
K1. &  $\alpha \cup \beta: = \iota(e_I (\alpha) \lor e_I(\beta))$ &$\quad\quad$& K$'$1. & $\xi \sqcup \chi: = \gamma(e_C (\xi) \lor e_C(\chi))$\\
K2.& $\alpha \cap \beta: = \iota(e_I (\alpha) \land e_I(\beta))$ && K$'$2. &  $\xi \sqcap \chi: = \gamma(e_C(\xi) \land e_C(\chi))$\\
K3.&  $1_I:  = \iota(\top)$; && K$'$3. & $1_C:  = \gamma(\top)$\\
K4.&  $0_I: = \iota(\bot)$ && K$'$4.  & $0_C = \gamma(\bot)$.\\
\end{tabular}
\end{center}
If $\mathbb{T}$ is a tqBa5, we define $\sim: K_I\rightarrow K_I$  and $-: K_C\rightarrow K_C$ by the following equation:
\begin{center}
\begin{tabular}{llcll}
K5. & $\sim\alpha := \iota\neg e_I(\alpha)$ && K$'$5. & $- \xi := \gamma \neg e_C \xi$ \\
\end{tabular}
\end{center}
\end{definition}



\begin{lemma}\label{prop: algebraic structure on kernels}
For any  tqBa $\mathbb{T}$,
\begin{enumerate}
\item[1.] $\iota : \mathbb{T} \twoheadrightarrow \mathbb{K}_I $ and $ \gamma : \mathbb{T}\twoheadrightarrow\mathbb{K}_C$  are surjective maps which satisfy the following equations: for all $a, b \in \mathrm{L}$,

\begin{enumerate}
\item $\iota (a) \cap  \iota(b) =  \iota(a \land b)$, \ \ \ \ $\iota(\top) = 1_I$, \ \ \ \  $\iota(\bot) = 0_I$;
 \item $\gamma(a) \cup  \gamma(b) =  \gamma(a \lor b)$, \ \ \ \ $\gamma(\top) = 1_C$, \ \ \ \  $\gamma(\bot) = 0_C$.
\end{enumerate}
\end{enumerate}
\begin{enumerate}
\item[2.] $e_I:  \mathbb{K}_I \rightarrow \mathbb{T}$ and $e_C:  \mathbb{K}_C \rightarrow \mathbb{T}$  are injective maps which satisfy the following equations: for all $\alpha, \beta\in K_I$, and all $\xi, \chi\in K_C$,
\begin{enumerate}
\item $ e_I(\alpha) \land  e_I(\beta) =  e_I (\alpha \cap \beta)$,\ \ \ \ $ e_I(\alpha) \lor  e_I(\beta) =  e_I (\alpha \cup \beta)$;
\item $ e_C(\xi) \land  e_C(\chi) =  e_C (\xi \sqcap \chi)$,\ \ \ \ $ e_C(\xi) \lor  e_C(\chi) =  e_C(\xi \sqcup \chi)$;
\item  $e_I(1_I) = \top$, \ \ \ \ $e_I(0_I) = \bot$, \ \ \ \ $e_C(1_C) = \top$, \ \ \ \ $e_C(0_C) = \bot$.
\end{enumerate}
\end{enumerate}
\end{lemma}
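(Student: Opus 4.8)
The plan is to reduce every claimed identity to an unfolding of the defining equations K1--K4 and K$'$1--K$'$4 of Definition \ref{def:kernel}, followed by an application of the retraction identities recorded in \eqref{eq:Interior}, namely $\iota(e_I(\alpha))=\alpha$, $e_I(\iota(a))=Ia$, $\gamma(e_C(\xi))=\xi$ and $e_C(\gamma(a))=Ca$. Surjectivity of $\iota$ and $\gamma$ is immediate, since by construction $\mathrm{K}_I=\{Ia\mid a\in\mathrm{L}\}$ and $\mathrm{K}_C=\{Ca\mid a\in\mathrm{L}\}$, so every element of $\mathrm{K}_I$ (resp.\ $\mathrm{K}_C$) is of the form $\iota(a)$ (resp.\ $\gamma(a)$); injectivity of $e_I$ and $e_C$ follows at once from the fact that the equations $\iota(e_I(\alpha))=\alpha$ and $\gamma(e_C(\xi))=\xi$ exhibit left inverses. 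I would also record at the outset that each $e_I(\alpha)$ is $I$-fixed and each $e_C(\xi)$ is $C$-fixed, since $Ie_I(\alpha)=e_I(\iota(e_I(\alpha)))=e_I(\alpha)$ by \eqref{eq:Interior}, and dually $Ce_C(\xi)=e_C(\xi)$; this observation drives Part 2.

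For Part 1, unfolding K2 and using \eqref{eq:Interior} gives $\iota(a)\cap\iota(b)=\iota\big(e_I(\iota(a))\land e_I(\iota(b))\big)=\iota(Ia\land Ib)=I(Ia\land Ib)$, which by T1 and T2 equals $Ia\land Ib=I(a\land b)=\iota(a\land b)$, as required. The constant equations $\iota(\top)=1_I$ and $\iota(\bot)=0_I$ are literally the definitions K3 and K4. The statements for $\gamma$ in Part 1(b) are handled in the dual manner, using K$'$1, K$'$3, K$'$4, the identity $e_C(\gamma(a))=Ca$, and the fact that $C=\neg I\neg$ distributes over $\lor$ (the De Morgan dual of T1); concretely $\gamma(a)\sqcup\gamma(b)=\gamma(Ca\lor Cb)=C(Ca\lor Cb)=Ca\lor Cb=C(a\lor b)=\gamma(a\lor b)$.

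For Part 2, the meet equation for $e_I$ follows by unfolding K2 and \eqref{eq:Interior}: $e_I(\alpha\cap\beta)=e_I(\iota(e_I(\alpha)\land e_I(\beta)))=I(e_I(\alpha)\land e_I(\beta))$, and since $e_I(\alpha),e_I(\beta)$ are $I$-fixed, T1 yields $I(e_I(\alpha)\land e_I(\beta))=Ie_I(\alpha)\land Ie_I(\beta)=e_I(\alpha)\land e_I(\beta)$. The one genuinely non-formal step is the join equation for $e_I$: the same unfolding of K1 gives $e_I(\alpha\cup\beta)=I(e_I(\alpha)\lor e_I(\beta))$, and to conclude that this equals $e_I(\alpha)\lor e_I(\beta)$ one must know that the join of two $I$-fixed (i.e.\ open) elements is again $I$-fixed — which is exactly Lemma \ref{cor:interior-closure}(i). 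Dually, the join equation for $e_C$ uses that $C$ preserves $\lor$, while the meet equation for $e_C$ rests on Lemma \ref{cor:interior-closure}(ii). The constant identities in Part 2(c) follow from \eqref{eq:Interior} together with $I\top=\top$ (T4), $I\bot=\bot$, and their De Morgan consequences $C\top=\top$ and $C\bot=\bot$.

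I expect the main obstacle to be precisely the asymmetry just flagged: interior commutes with finite meets by T1 but not with joins, so preservation of joins by $e_I$ — and, dually, preservation of meets by $e_C$ — cannot be read off the axioms directly and genuinely depends on the closure-of-kernels facts established in Lemma \ref{cor:interior-closure}. Once those are invoked, everything remaining is a bookkeeping substitution of the retraction equations \eqref{eq:Interior} into the definitions K1--K4 and K$'$1--K$'$4.
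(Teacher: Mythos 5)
Your proof is correct and takes essentially the same route as the paper's: both reduce every identity to unfolding K1--K4 and the retraction equations \eqref{eq:Interior}, handle the meet cases via T1--T2, and isolate the join case for $e_I$ (dually, the meet case for $e_C$) as the one step that genuinely requires Lemma \ref{cor:interior-closure}. Your use of the ``$I$-fixed'' observation in place of the paper's choice of representatives $\alpha=\iota(a)$, $\beta=\iota(b)$ is only a cosmetic difference, since $Ie_I(\alpha)=e_I(\alpha)$ is just T2 in disguise.
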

\begin{proof}
We only prove 1(a) and 2(a), the arguments for 1(b) and 2(b) being dual. The identities in 2(c) easily follow using K3, K4, K$'$3, K$'$4 and the definition of $\mathbb{T}$. The surjectivity of $\iota$ is an immediate consequence of the definition of $K_I$ (cf.~beginning of Section \ref{ssec:kernel}). In what follows, we show that $\iota$ satisfies 1(a).
{\fns
\begin{center}
\begin{tabular}{llll}
$\iota(a) \cap \iota(b)$&$=$&$ \iota (e\iota(a) \land  e\iota(b))$ &  K2\\
&$=$&$\iota(Ia \land Ib)$& \eqref{eq:Interior} \\
&$=$&$ \iota I(a \land b)$ &  T1\\
&$=$&$\iota e\iota(a \land b)$ & \eqref{eq:Interior} \\
&$=$&$\iota(a \land b)$ & \eqref{eq:Interior} \\
\end{tabular}
\end{center}
}

The remaining identities in 1(a) can be shown analogously using K3 and K4. Let us show that  $e_I$ satisfies 2(a) and 2(c). For any $\alpha , \beta \in K_I$, let $a,b \in\mathrm{L}$ be such that $\alpha = \iota(a)$ and $\beta = \iota(b)$.
{\fns
\begin{center}
\begin{tabular}{llllcllll}
$e(\alpha \cap \beta)$&=&$e(\iota(a) \cap \iota(b))$ &  ($\alpha = \iota(a)$, $\beta = \iota(b)$) &&$e(\alpha \cup \beta)$&=&$e(\iota(a) \cup \iota(b))$& ($\alpha = \iota(a)$, $\beta = \iota(b)$) \\
&$=$& $e\iota (e\iota(a) \land e\iota(b))$ &  K2&&&$=$&$ e\iota(e\iota(a) \lor e(\iota(b))))$& by K1\\
&$=$& $I(Ia \land Ib)$ &  \eqref{eq:Interior}&&&$=$&$I(Ia \lor Ib)$ & \eqref{eq:Interior} \\
&$=$& $IIa \land IIb$ &  T1&&&$=$&$ Ia \lor Ib$ &  Lemma \ref{cor:interior-closure}\\
&$=$& $Ia \land Ib$ &  T2&&&$=$&$ e\iota(a) \lor e\iota(b)$ & \eqref{eq:Interior} \\
&$=$& $e\iota(a) \land e\iota(b)$ & \eqref{eq:Interior}&&&$=$&$ e(\alpha) \lor e(\beta)$ & ($\alpha = \iota(a)$, $\beta = \iota(b)$) \\
&$=$& $e(\alpha) \land e(\beta)$ & ($\alpha = \iota(a)$, $\beta = \iota(b)$) && \\
\end{tabular}
\end{center}
}
\end{proof}
\begin{proposition}\label{def:equivalence of kernels}
If $\mathbb{T}$ is a tqBa5, then $\mathbb{K}_I \cong \mathbb{K}_C$.
\end{proposition}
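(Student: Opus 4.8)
The plan is to show that the underlying sets $\mathrm{K}_I$ and $\mathrm{K}_C$ actually coincide as subsets of $\mathrm{L}$, and that on this common carrier the two algebraic structures are literally identical, so that the \emph{identity map} witnesses the isomorphism. First I would extract from T5 its ``dual''. The axiom T5 reads $CIa = Ia$; unfolding $C = \neg I \neg$ and applying the De Morgan involution $\neg$ to both sides yields $I \neg I a = \neg I a$ for every $a \in \mathrm{L}$. Substituting $\neg a$ for $a$ and unfolding $C$ once more gives $ICa = Ca$ for every $a$. These two identities show $\mathrm{K}_I \subseteq \mathrm{K}_C$ (since $Ia = CIa = C(Ia)$) and $\mathrm{K}_C \subseteq \mathrm{K}_I$ (since $Ca = ICa = I(Ca)$), hence $\mathrm{K}_I = \mathrm{K}_C$; call this common set $\mathrm{K}$. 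Note also that $\neg$ maps $\mathrm{K}$ to itself: for $\alpha = Ia$ we have $\neg \alpha = \neg Ia = I \neg Ia \in \mathrm{K}_I = \mathrm{K}$.

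Next I would check that the two structures agree operation by operation on $\mathrm{K}$, noting that under the identification $\mathrm{K}_I = \mathrm{K} = \mathrm{K}_C$ both embeddings $e_I, e_C$ are the same inclusion $\mathrm{K} \hookrightarrow \mathrm{L}$. By Lemma \ref{prop: algebraic structure on kernels}(2a) and (2b), the kernel meets and joins are just the restrictions of $\land$ and $\lor$ of $\mathrm{L}$, so that $\alpha \cap \beta = \alpha \land \beta = \alpha \sqcap \beta$ and $\alpha \cup \beta = \alpha \lor \beta = \alpha \sqcup \beta$ for all $\alpha, \beta \in \mathrm{K}$; by Lemma \ref{prop: algebraic structure on kernels}(2c) the constants coincide, namely $1_I = \top = 1_C$ and $0_I = \bot = 0_C$. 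For the involutions I would compute $\sim\!\alpha = \iota \neg e_I(\alpha) = I \neg \alpha = \neg \alpha$ using the identity $I \neg I a = \neg I a$ derived above, and dually $-\xi = \gamma \neg e_C(\xi) = C \neg \xi = \neg \xi$ using $CI\neg b = I\neg b$ (i.e.\ T5); hence both $\sim$ and $-$ restrict to $\neg$ on the common carrier. Consequently every operation of $\mathbb{K}_I$ equals the corresponding operation of $\mathbb{K}_C$, and the identity map $\mathrm{K} \to \mathrm{K}$ is the desired isomorphism $\mathbb{K}_I \cong \mathbb{K}_C$.

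The only genuinely non-routine step is the first one: deriving the dual $ICa = Ca$ of T5 from T5 together with the De Morgan laws, since it is exactly this that makes $\mathrm{K}_I$ and $\mathrm{K}_C$ collapse onto a single carrier. Once this collapse is in place, everything else is a direct application of Lemma \ref{prop: algebraic structure on kernels}, whose content is precisely that the kernel operations are the ambient operations of $\mathrm{L}$ restricted. Alternatively, one may phrase the same argument by exhibiting $\gamma \circ e_I : \mathbb{K}_I \to \mathbb{K}_C$ and $\iota \circ e_C : \mathbb{K}_C \to \mathbb{K}_I$ as mutually inverse homomorphisms, which by T5 and the dual identity $ICa = Ca$ both reduce to the identity on $\mathrm{K}$.
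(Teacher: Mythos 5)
Your proof is correct, and it takes a route that is genuinely different in presentation from the paper's, even though both hinge on the same key fact. The paper defines the explicit map $f := \gamma \circ e_I : \mathbb{K}_I \to \mathbb{K}_C$ and verifies directly that it is surjective (using the dual form $ICa = Ca$ of T5), monotone, order-reflecting (using T5 itself), and compatible with $\sim$ and $-$; it never observes that the two carriers coincide. You instead prove the stronger statement that $\mathrm{K}_I = \mathrm{K}_C$ as subsets of $\mathrm{L}$ and that every operation of $\mathbb{K}_I$ literally equals the corresponding operation of $\mathbb{K}_C$ on this common carrier, so the identity map is the isomorphism. Your derivation of the dual identity $ICa = Ca$ from T5 and the De Morgan involution is exactly the non-trivial ingredient the paper also needs (it invokes ``dual of T5'' in its surjectivity computation), and your remaining verifications are correct: the coincidence of meets, joins and bounds follows from parts 2(a)--(c) of Lemma \ref{prop: algebraic structure on kernels}, and the computation $\sim\alpha = I\neg Ia = \neg Ia = \neg\alpha$ and $-\xi = C\neg Ca = \neg Ca = \neg\xi$ shows both involutions restrict to $\neg$. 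What your version buys is a cleaner justification for the paper's subsequent remark that one may ``drop the subscripts'' and speak of \emph{the} kernel $\mathbb{K}$: you show the identification is not merely up to isomorphism but on the nose. What the paper's version buys is that it generalizes more readily to the abstract heterogeneous setting of Definition \ref{def: heterogeneous algebras}, where $\mathbb{L_I}$ and $\mathbb{L_C}$ need not share a carrier and only a map like $\gamma^{\phantom{.}}\!\circ e_I$ is available. Your closing observation that $\gamma \circ e_I$ reduces to the identity under your identification correctly reconciles the two arguments.
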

\begin{proof}
Let $f: K_I \to K_C$ be defined as $f:= \gamma e_I$. To show that $f$ is surjective, let $\xi \in K_C$, and let $\xi = \gamma a$ for some $a \in L$, 
{\fns
\begin{center}
\begin{tabular}{rcll}
$\gamma a$ & $=$ & $\gamma e_C \gamma a$ &  \eqref{eq:Interior}\\
                     & $=$ & $\gamma C a$ & \eqref{eq:Interior} \\
                     & $=$ & $\gamma IC a$ & dual of T5\\
                     & $=$ & $\gamma e_I \iota e_C \gamma a$ & \eqref{eq:Interior}\\
                     & $=$ & $\gamma e_I \iota e_C \xi$ & ($\xi = \gamma a$)\\
                     & $=$ & $f(\iota e_C \xi)$ & ($f:= \gamma e_I$)\\
\end{tabular}
\end{center}
}
Since both $\gamma $ and $e_I$ are monotone, so is $f: = \gamma e_I$. To finish the proof, we need to show that for all $\alpha, \beta\in K_I$,  if 
$\gamma e_I(\alpha) \leq \gamma e_I(\beta)$, then $\alpha \leq \beta$. Since $e_C$ is an order embedding, the assumption can be equivalently rewritten as $e_C\gamma e_I(\alpha) \leq e_C\gamma e_I(\beta)$, Let $a, b\in L$ such that $\alpha = \iota a$  and $\beta = \iota b$. Then we can equivalently rewrite the assumption as $e_C\gamma e_I\iota a \leq e_C\gamma e_I\iota b$.  Since $I: = e_I\iota$ and $C: = e_C\gamma$, we can again equivalently rewrite the assumption as $C I a \leq C I b$, and hence, by T5, as $I a \leq Ib$, that is, $e_I \iota a \leq e_I \iota b$. Since $e_I$ is an order-embedding, this yields  $\iota a \leq \iota b$, that is, $\alpha \leq \beta$, as required. This finishes the proof that $\mathbb{K}_I \cong \mathbb{K}_C$ as lattices. Finally, we need to show that $f(\sim \alpha) = - f(\alpha)$ for any $\alpha\in K_I$. For such an  $\alpha$, let $a\in L$ s.t.~$\alpha = \iota(a)$.
{\fns
\begin{center}
\begin{tabular}{cc}
\begin{tabular}{rcll}
$f(\sim \alpha)$ & $=$ & $\gamma e_I \sim \alpha$ & \\
                          & $=$ & $\gamma e_I \sim \iota(a)$ & \\
                          & $=$ & $\gamma e_I \iota \neg e_I \iota(a)$ & \\
                          & $=$ & $\gamma e_I \iota \neg I (a)$ & \\
                          & $=$ & $\gamma I \neg I (a)$ & \\
                          & $=$ & $\gamma I C \neg  (a)$ & \\
                          & $=$ & $\gamma C \neg  (a)$ & \\
\end{tabular}
 & 
\begin{tabular}{rcll}
$- f(\alpha)$ & $=$ & $\gamma \neg e_C f (\alpha)$ & \\
              & $=$ & $\gamma \neg e_C \gamma e_I (\alpha)$ & \\
              & $=$ & $\gamma \neg e_C \gamma e_I \iota (a)$ & \\
              & $=$ & $\gamma \neg C e_I \iota (a)$ & \\
              & $=$ & $\gamma \neg C I (a)$ & \\
              & $=$ & $\gamma I \neg I (a)$ & \\
              & $=$ & $\gamma I C \neg (a)$ & \\
              & $=$ & $\gamma C \neg (a)$ & \\
\end{tabular}
 \\
\end{tabular}
\end{center}
}
\end{proof}

\noindent By the proposition above, we can drop the subscripts in $\mathbb{K}_I$ (or $\mathbb{K}_C$)  and in  $e_I$ and $ e_C$, and refer to $\mathbb {K}$ as the {\em kernel} of  $\mathbb {T}$. The following lemma are  straightforward consequences of K5:
\begin{lemma}
(1) If $\mathbb{T}$ is a tqBa5, then $e({\sim}\alpha) = \neg e(\alpha)$;\\
(2) If $\mathbb{T}$ is an IA1, then $\iota(a \lor b) = \iota(a) \cup \iota(b)$.
\end{lemma}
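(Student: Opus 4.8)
The plan is to prove both parts by direct equational calculation, leaning on the definitions K5, K$'$5, and the standard identities in \eqref{eq:Interior} together with the relevant axioms. Both statements have the flavour of ``straightforward consequences of K5'' precisely because they amount to unwinding the definitions of $\sim$ and $\cup$ on the kernel and matching them against an interior/closure computation in $\mathbb{T}$.

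For part (1), I would start from the left-hand side $e(\sim\alpha)$. Since every $\alpha\in K_I$ is of the form $\alpha=\iota(a)$ for some $a\in\mathrm{L}$, by K5 we have $\sim\alpha=\iota\neg e_I(\alpha)=\iota\neg Ia$, using $e_I(\iota(a))=Ia$ from \eqref{eq:Interior}. Applying $e$ gives $e(\sim\alpha)=e\iota(\neg Ia)=I\neg Ia$, again by \eqref{eq:Interior}. On the other side, $\neg e(\alpha)=\neg Ia$, so the claim reduces to the equality $I\neg Ia=\neg Ia$, i.e.~that $\neg Ia$ is already $I$-open. This is where the tqBa5 hypothesis enters: $\neg Ia=C\neg a$ by the definition $Ca:=\neg I\neg a$, and by T5 (in the form $CI=I$ on the relevant element, or its dual) one obtains that the closure $C\neg a$ is fixed by $I$. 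Concretely, I expect to rewrite $I\neg Ia=IC\neg a$ and then invoke the dual of T5 ($IC\xi=C\xi$, equivalently that closed elements are open) to collapse this to $C\neg a=\neg Ia$. This mirrors exactly the chain $\gamma I\neg I(a)=\gamma IC\neg(a)=\gamma C\neg(a)$ already used in the proof of Proposition \ref{def:equivalence of kernels}, so the bookkeeping is routine.

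For part (2), under the IA1 hypothesis I want to show $\iota(a\lor b)=\iota(a)\cup\iota(b)$. Unfolding K1, $\iota(a)\cup\iota(b)=\iota(e_I\iota(a)\lor e_I\iota(b))=\iota(Ia\lor Ib)$ by \eqref{eq:Interior}. Thus the goal becomes $\iota(a\lor b)=\iota(Ia\lor Ib)$, i.e.~$I(a\lor b)=I(Ia\lor Ib)$ after applying $e_I$ and using that $e_I$ is an order-embedding. By Lemma \ref{cor:interior-closure}(i) the right-hand side equals $Ia\lor Ib$, so I must establish $I(a\lor b)=Ia\lor Ib$, which is precisely axiom T7. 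The subtlety is that IA1 is axiomatized by T5 and T6 ($Ia\lor\neg Ia=\top$), not directly by T7, so the real content is deriving $I(a\lor b)=Ia\lor Ib$ from T6 (plus the tqBa axioms and T5). I would prove $I(a\lor b)\leq Ia\lor Ib$ by splitting on the Boolean-like complementation that T6 provides for $I$-elements: meeting $a\lor b$ appropriately with $Ia$ and $\neg Ia$ and using monotonicity of $I$, distributivity of the De Morgan lattice, and T1--T3 to push $I$ through the join. The reverse inequality is immediate from monotonicity of $I$.

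The main obstacle is the inequality $I(a\lor b)\leq Ia\lor Ib$ in part (2): monotonicity alone gives only $\geq$, and extracting $\leq$ genuinely uses the complementation axiom T6 rather than any purely order-theoretic property of the interior operator. I expect the decisive step to be a distributivity argument of the shape $I(a\lor b)=I(a\lor b)\land\top=I(a\lor b)\land(Ia\lor\neg Ia)$, then distributing and showing each conjunct is below $Ia\lor Ib$; the term involving $\neg Ia$ will require combining $I(a\lor b)\land\neg Ia$ with T1 and T3 to bound it by $Ib$. Everything in part (1) and the $\cup$-unfolding in part (2) is purely definitional and should go through with no friction.
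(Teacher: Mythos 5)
Your part (1) is correct and is essentially the intended argument: after writing $\alpha=\iota(a)$ and unwinding K5 and \eqref{eq:Interior}, the claim reduces to $I\neg Ia=\neg Ia$, which is exactly T5 ($\neg I\neg Ia=Ia$) with $\neg$ applied to both sides, using the involutivity of $\neg$ in a De Morgan algebra. (The paper offers no written proof of this lemma, only the remark that it follows from K5, so there is nothing further to compare against.)

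Part (2) contains a genuine gap, and it sits precisely at the step you yourself flagged as the ``main obstacle''. Your unfolding is right: by K1 and \eqref{eq:Interior}, $\iota(a)\cup\iota(b)=\iota(Ia\lor Ib)$, and since $e_I$ is an order embedding and $I(Ia\lor Ib)=Ia\lor Ib$ by Lemma \ref{cor:interior-closure}(i), the claim is equivalent to $I(a\lor b)=Ia\lor Ib$, i.e.\ to T7. But T7 is \emph{not} derivable from the tqBa axioms together with T5 and T6, so the distributivity argument you propose cannot be completed: the inequality $I(a\lor b)\land\neg Ia\leq Ib$ fails in general. Concretely, take the powerset Boolean algebra of $\{1,2\}$ with the indiscrete topology, so that $I\emptyset=I\{1\}=I\{2\}=\emptyset$ and $I\{1,2\}=\{1,2\}$. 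This is an IA1: T5 holds since the only open sets are $\emptyset$ and the whole space, both closed, and T6 holds trivially because the underlying lattice is Boolean. Yet $I(\{1\}\cup\{2\})=\{1,2\}$ while $I\{1\}\cup I\{2\}=\emptyset$. So the statement of part (2) is false as printed; the intended hypothesis is evidently IA2 (compare axiom T7 in Definition \ref{def:dm} and condition H9 in Definition \ref{def: heterogeneous algebras}, which is verbatim the conclusion of part (2)). Under that hypothesis, the purely definitional unfolding you already carried out finishes the proof in one line, and no appeal to T6 is needed.
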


\begin{proposition}\label{prop:dmba}
If $\mathbb{T}$ is a  tqBa5, then $\mathbb{K}$ is a De Morgan algebra. Moreover, if $\mathbb{T}$ is an IA1, then $\mathbb{K}$ is a Boolean algebra.
\end{proposition}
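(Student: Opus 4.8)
The plan is to show that the embedding $e \colon \mathbb{K} \to \mathbb{T}$ is an injective homomorphism for the full De Morgan signature, so that $\mathbb{K}$ is isomorphic to its image $e[\mathbb{K}]$, and then to argue that $e[\mathbb{K}]$ is a subalgebra of the De Morgan algebra $\mathbb{T}$. Since being a De Morgan algebra is an equational condition, this will force $\mathbb{K}$ to be one as well.

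First I would collect the preservation properties already at hand. By Lemma \ref{prop: algebraic structure on kernels}(2), $e$ is injective and satisfies $e(\alpha \cap \beta) = e(\alpha) \land e(\beta)$, $e(\alpha \cup \beta) = e(\alpha) \lor e(\beta)$, $e(1_I) = \top$ and $e(0_I) = \bot$; and by the lemma immediately preceding the present statement, $e(\sim\alpha) = \neg e(\alpha)$, since $\mathbb{T}$ is a tqBa5. Consequently $e[\mathbb{K}]$ is closed under $\land, \lor, \neg, \top, \bot$, i.e.~it is a De Morgan subalgebra of $\mathbb{T}$, and $e$ is a bijective homomorphism from $\mathbb{K}$ onto it; hence $\mathbb{K} \cong e[\mathbb{K}]$ as algebras of the De Morgan signature. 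Concretely, each De Morgan identity is reflected along $e$: one applies $e$ to a purported identity in $\mathbb{K}$, pushes it through the operations via the equations above, invokes the corresponding identity in $\mathbb{T}$, and cancels $e$ by injectivity. For instance $e(\sim\sim\alpha) = \neg\neg e(\alpha) = e(\alpha)$ gives involutivity $\sim\sim\alpha = \alpha$, and $e(\sim(\alpha \cup \beta)) = \neg(e(\alpha) \lor e(\beta)) = \neg e(\alpha) \land \neg e(\beta) = e(\sim\alpha \cap \sim\beta)$ gives one of the De Morgan laws; distributivity, boundedness, and the dual law follow in the same manner. This establishes that $\mathbb{K}$ is a De Morgan algebra.

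For the second claim, assume $\mathbb{T}$ is an IA1, so that T6, namely $Ia \lor \neg Ia = \top$, holds for all $a$. Given $\alpha \in K_I$, fix $a$ with $\alpha = \iota(a)$, so that $e(\alpha) = e_I\iota(a) = Ia$. Then $e(\alpha \cup \sim\alpha) = e(\alpha) \lor \neg e(\alpha) = Ia \lor \neg Ia = \top = e(1_I)$, and injectivity yields $\alpha \cup \sim\alpha = 1_I$. Applying $\neg$ to T6 and using involutivity together with the De Morgan law in $\mathbb{T}$ produces the dual $Ia \land \neg Ia = \bot$, whence $e(\alpha \cap \sim\alpha) = \bot = e(0_I)$ and so $\alpha \cap \sim\alpha = 0_I$. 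Thus $\sim$ is a complement in the bounded distributive lattice $\mathbb{K}$, which is therefore a Boolean algebra.

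Since the substantive content lives in the preservation lemmas already proved, the argument is mostly bookkeeping; the one step that is not completely mechanical is deriving the non-contradiction law $Ia \land \neg Ia = \bot$ from the excluded-middle axiom T6, where I use the involutivity of $\neg$ and the De Morgan law in $\mathbb{T}$ rather than assuming in advance that $\neg$ is a complement.
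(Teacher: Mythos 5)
Your proof is correct, but it takes a genuinely different route from the paper's. The paper verifies the De Morgan identities in $\mathbb{K}$ one by one, by direct computation: for involutivity, for $\sim(\alpha \cup \beta) = \sim\alpha\, \cap \sim\beta$, for $\sim 1_I = 0_I$, and (in the IA1 case) for $\sim\alpha \cup \alpha = 1_I$, it unfolds the kernel operations via K1--K5 and \eqref{eq:Interior} back into $I$, $C$ and $\neg$ on $\mathrm{L}$, and then invokes T1, T2, T5, Lemma \ref{cor:interior-closure} and the De Morgan laws of $\mathbb{T}$. You instead package Lemma \ref{prop: algebraic structure on kernels}(2) together with the lemma immediately preceding the proposition ($e(\sim\alpha) = \neg e(\alpha)$ in a tqBa5) into the single observation that $e$ is an injective homomorphism for the full De Morgan signature, so that every identity valid in $\mathbb{T}$ is reflected to $\mathbb{K}$ at once. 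This is shorter and more uniform, and it isolates exactly where T5 enters (namely in the compatibility of $e$ with $\sim$, since $e(\sim\alpha) = I\neg Ia = \neg CIa = \neg Ia$); the cost is that it leans on that auxiliary lemma, which the paper states without proof and whose content is precisely the computation the paper's inline derivations redo. For the Boolean claim the two arguments agree in substance: both reduce $\alpha \cup \sim\alpha = 1_I$ to T6 via injectivity of $e$; you additionally derive $\alpha \cap \sim\alpha = 0_I$ directly by negating T6 inside $\mathbb{T}$, whereas the paper proves only the join law and leaves the meet law to follow from the already-established De Morgan structure of $\mathbb{K}$. Both routes are sound.
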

\begin{proof}
For any $\alpha, \beta \in K_I$, let $a, b \in L$ such that $\alpha = \iota(a)$ and $\beta = \iota(b)$. Let us show that $\sim\sim\alpha = \alpha$ and $\sim(\alpha \cup \beta) = \sim\alpha\, \cap \sim\beta$.
{\fns
\begin{center}
\begin{tabular}{llllcllll}
$\sim\sim\alpha$&=&$\iota\neg e(\iota\neg e(\alpha))$&  K5&&$\sim(\alpha \cup \beta)$ &$=$&$\iota\neg e(\alpha \cup \beta)$ & K5 \\
&=&$\iota\neg e(\iota\neg e(\iota(a))$& (i) &&&=&$\iota\neg e(\iota(e(\alpha) \lor e(\beta)))$ & K1 \\
&$= $&$\iota\neg I \neg Ia$&  \eqref{eq:Interior}&&&=&$\iota\neg e(\iota(e(\iota(a) \lor e(\iota(b))))$ & ($\alpha = \iota(a)$, $\beta = \iota(b)$) \\
&$= $&$\iota CIa$& $C = \neg I \neg$ &&&=&$\iota\neg I(Ia \lor Ib)$ & \eqref{eq:Interior} \\
&$= $&$\iota Ia$&T5 && &=&$\iota\neg(Ia \lor Ib)$ & Lemma \ref{cor:interior-closure} \\
&$=$&$\iota e(\iota(a))$ &  \eqref{eq:Interior}&&&=&$\iota(\neg Ia \land \neg Ib)$& definition of $\mathbb{L}$ \\
&$=$&$ \iota(a)$ & \eqref{eq:Interior} &&&=&$\iota(C\neg a \land C\neg b)$&$C = \neg I \neg$ and definition of $\mathbb{L}$ \\
&$=$&$ \alpha$ & (i) &&&=&$\iota(IC\neg a \land IC\neg b)$&dual of T5 \\
&&&&&&=&$\iota(I\neg Ia \land I\neg Ib)$& $C = \neg I \neg$ and definition of $\mathbb{L}$ \\
&&&&&&=&$\iota(e\iota\neg e\iota(a) \land e\iota\neg e\iota(b))$&\eqref{eq:Interior} \\
&&&&&&=&$\iota(e\iota\neg e(\alpha) \land e\iota\neg e(\beta))$& ($\alpha = \iota(a)$, $\beta = \iota(b)$) \\
&&&&&&=&$\iota(e(\sim\alpha) \land e(\sim\beta))$& K5\\
&&&&&&=&$\sim\alpha\, \cap \sim\beta$& K2\\

\end{tabular}
\end{center}
}

\noindent Using K5, K3, \eqref{eq:Interior} and T7, one can show the identities $\sim 1_I = \iota\neg e(1_I) =   \iota\neg e(\iota(\top)) = \iota\neg I \top = \iota\bot = 0_I$. The argument for  $\sim 0_I =  1_I$ can be given dually. Hence, $\mathbb{K}_I$ is a De Morgan algebra. If $\mathbb{T}$ is an IA1, in order to show that $\mathbb{K}_I$ is a Boolean algebra, we only need to show $\sim\alpha \cup \alpha = 1_I$.
{\fns
\begin{center}
\begin{tabular}{llll}
  $\sim\alpha \cup \alpha$ & =& $\iota\neg e(\alpha) \cup  \alpha$ & K5\\
  &$=$&$\iota(e\iota\neg e(\alpha) \lor  e(\alpha))$ &  K1\\
  &$=$&$\iota(e\iota\neg e\iota(a) \lor  e\iota(a))$ & (i)\\
 &$=$&$\iota(I\neg Ia \lor  Ia)$ & \eqref{eq:Interior} \\
 &$=$&$\iota(\neg CIa \lor  Ia)$ &$C = \neg I \neg$ \\
 &$=$&$\iota(\neg Ia \lor  Ia)$ &T5 \\
 &$=$&$\iota(\top)$ &T6 \\
 &$=$&$1_I$ & K3 \\
 \end{tabular}
\end{center}
}
\end{proof}

\subsection{Heterogeneous algebras}\label{Heterogeneous presentation}

\begin{definition}
\label{def: heterogeneous algebras}
A {\em heterogeneous tqBa} (htqBa)  is a tuple $\mathbb{H} = (\mathbb{D}, \mathbb{L_I},\mathbb{L_C}, e_I, e_C, \iota, \gamma)$ such that:
\begin{itemize}
\item[H1] $\mathbb{D} = (\mathrm {D}, \lor, \land, \neg, \top, \bot)$ is a De Morgan algebra;
\item[H2] $\mathbb{L_I} = (\mathrm {L_I}, \cup, \cap, 0_I, 1_I)$ and $\mathbb{L_C} =(\mathrm{L_C}, \sqcup, \sqcap, 0_C, 1_C)$  are  bounded distributive  lattices;
\item[H3] $e_I: \mathbb{L_I}\hookrightarrow \mathbb{D}$ and $e_C: \mathbb{L_C}\hookrightarrow \mathbb{D}$  are  lattice homomorphisms;
\item[H4]  $\iota: \mathbb{D}\rightarrow \mathbb{L_I}$ and $ \gamma: \mathbb{D}\rightarrow\mathbb{L_C}$  satisfy the following identities:
\begin{center}
\begin{tabular}{llll}
(1)&$\iota(a \land b) = \iota(a) \cap \iota(b)$ &$\iota(\top) = 1$&$ \iota(\bot) = 0$\\
(2)&$\gamma(a \lor  b) = \gamma(a) \sqcup \gamma(b) $&$\gamma(\top) = 1$&$\gamma(\bot) = 0$
\end{tabular}
\end{center}
\item[H5] $e_I \dashv \iota$ \quad\quad $\gamma \dashv e_C$ \quad \quad $\iota(e_I(\alpha)) = \alpha$ \quad\quad$\gamma(e_C(\xi)) =\xi$;\footnote{Condition  H5 implies that $\iota$ is surjective and $e$ is injective.}
\item[H6] $e_C\gamma (a)= \neg e_I\iota (\neg a)$.
\end{itemize}
{\fns
\begin{center}
\begin{tikzpicture}[node/.style={circle, draw, fill=black}, scale=1]
\node (A) at (-1.5,-1.25) {$\mathbb{L_I}$};
\node (L) at (1.5,-1.25) {$\mathbb{D}$};
\node (B) at (4.5,-1.25) {$\mathbb{L_C}$};
\node (adj) at (0,-1.1) {{\rotatebox[origin=c]{270}{$\vdash$}}};
\node (adj) at (3,-1.1) {{\rotatebox[origin=c]{90}{$\vdash$}}};
\draw [right hook->] (A)  to node[below] {$e_{I}$}  (L);
\draw [left hook->] (B)  to node[below] {$e_{C}$}  (L);
\draw [->>] (L) to [out=135,in=45, looseness=1]   node[above] {$\iota$}  (A);
\draw [<<-] (B) to [out=135,in=45, looseness=1]   node[above] {$\gamma$}  (L);
\end{tikzpicture}
\end{center}
}
The heterogeneous algebras corresponding to the subclasses of tqBas considered in Section \ref{ssec: varieties} are defined as follows:
{\fns
\begin{center}
\begin{tabular}{|c|c|c|}
\hline
Algebra & Acronym & Conditions\\
\hline
{\em heterogeneous tqBa5} &htqBa5& H7: $ \mathbb{L_I} = \mathbb{L_C} = \mathbb{L}$ is a De Morgan algebra, $e_I = e_C = e$ is a De Morgan homomorphism.\\
\hline
{\em heterogeneous IA1} & hIA1 &H7, H8: $\mathbb{L}$ is a Boolean algebra.\\
 \hline
 {\em heterogeneous IA2} &hIA2 &H7, H9: $\iota(a \lor b) = \iota(a) \cup \iota(b)$.\\
 \hline
 {\em heterogeneous IA3}  &hIA3 &H7, H10: $\iota(a) \leq \iota(b)$ and $ \gamma(a) \leq  \gamma(b)$ imply $a \leq b$.\\
 \hline
 {\em heterogeneous pra} &hpra& H7, H8, H9, H10.\\
 \hline
 \end{tabular}
 \end{center}
 }
In what follows, we use the abbreviated names of the heterogeneous algebras written in ``blackboard bold''  (e.g.~$\mathbb{HTQBA}$, etc.) to indicate their corresponding classes. A heterogeneous algebra $\mathbb{H}$ is {\em perfect} if:
\begin{enumerate}
\item Every distributive  lattice (expansion) in the signature of $\mathbb{H}$ is  perfect (cf.~[Definition 2.14]\cite{GNV05});
\item Every join (resp.~meet) preserving map in the signature of $\mathbb{H}$ is completely join (resp.~meet) preserving.
\end{enumerate}
\end{definition}
\begin{definition}
\label{def: Dplus}
If  $\mathbb{T} = (\mathbb{L}, I)$ is a tqBa, we  let $\mathbb{T}^+ := (\mathbb{L}, \mathbb{K}_\mathbb{I},  \mathbb{K}_\mathbb{C}, e_I, e_C, \iota, \gamma)$, where:
\begin{itemize}
\item[$\cdot$]  $\mathbb{K}_\mathbb{I}$ and $\mathbb{K}_\mathbb{C}$ are the left and right kernels of $\mathbb{T}$ (cf.~Definition \ref{def:kernel});
\item[$\cdot$]  $e_I: \mathbb{K}_\mathbb{I} \hookrightarrow \mathbb{L}$ and $e_C: \mathbb{K}_\mathbb{C} \hookrightarrow \mathbb{L}$ are defined as the embeddings of the domains of $\mathbb{K}_\mathbb{I}$ and $\mathbb{K}_\mathbb{C}$ into the domain of $\mathbb{L}$;
\item[$\cdot$]  $\iota: \mathbb{L} \rightarrow \mathbb{K}_\mathbb{I}$ and $\gamma: \mathbb{L} \rightarrow \mathbb{K}_\mathbb{C}$ are defined by $\iota(a) =Ia$  and $\gamma(a) = Ca$ respectively.
\end{itemize}
If  $\mathbb{T} = (\mathbb{L}, I)$ is a tqBa5, the definition above can be simplified by identifying $\mathbb{K}_\mathbb{I}$ and $\mathbb{K}_\mathbb{C}$ and also $e_I$ and $e_C$. In this case  we write $\mathbb{T}^+ := (\mathbb{L}, \mathbb{K}, e, \iota, \gamma)$.
\end{definition}
\begin{definition}
\label{def:Htplus}
If $\mathbb{H} = (\mathbb{D}, \mathbb{L_I},\mathbb{L_C}, e_I, e_C, \iota, \gamma)$ is an htqBa, we let $\mathbb{H}_+ := (\mathbb{D}, I, C)$ where the unary operations $I$ and $C$ on $\mathbb{D}$ are defined by the assignments $a \mapsto e_I(\iota(a))$ and  $a \mapsto e_C(\gamma(a))$ respectively.

\end{definition}
\noindent Let $\mathbb{A}$ denote a class of rough algebras (cf.~Section \ref{ssec: varieties}), and $\mathbb{HA}$ its corresponding class of heterogeneous algebras.
\begin{proposition}
\label{prop:from single to multi}
\begin{enumerate}
\item If $\mathbb{T} \in \mathbb{A}$, then    $\mathbb{T}^+\in \mathbb{HA}$;
\item If $\mathbb{H}\in \mathbb{HA}$, then $\mathbb{H}_+\in \mathbb{A}$;
\item $ \mathbb{T} \cong (\mathbb{T}^+)_+ \quad \mbox{and}\quad \mathbb{H} \cong (\mathbb{H}_+)^+.$
\end{enumerate}
\end{proposition}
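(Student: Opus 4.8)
The plan is to prove the three claims in sequence, since each builds on the previous one. The bulk of the work for claims 1 and 2 has already been set up in the preceding sections, so the proof will largely consist of citing those results and verifying the conditions of the target definitions.

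For claim 1, I would proceed by cases according to which subclass $\mathbb{A}$ is. The base case $\mathbb{A} = \mathbb{TQBA}$ requires checking that $\mathbb{T}^+ = (\mathbb{L}, \mathbb{K}_\mathbb{I}, \mathbb{K}_\mathbb{C}, e_I, e_C, \iota, \gamma)$ satisfies H1--H6. Here H1 is immediate since $\mathbb{L}$ is a De Morgan algebra; H2 follows from Lemma \ref{prop: algebraic structure on kernels} together with the fact that the kernels inherit distributivity from $\mathbb{L}$; H3 is exactly the content of Lemma \ref{prop: algebraic structure on kernels}(2); H4 is Lemma \ref{prop: algebraic structure on kernels}(1); H5 reproduces the adjunctions \eqref{eq:adjuction} and the identities \eqref{eq:Interior}; and H6 is the computation $e_C\gamma(a) = Ca = \neg I\neg a = \neg e_I\iota(\neg a)$, again via \eqref{eq:Interior} and the definition of $C$. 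For the subclasses, I would invoke Proposition \ref{def:equivalence of kernels} and Proposition \ref{prop:dmba} to obtain H7 (and H8 in the IA1 case), and verify H9, H10 directly from the corresponding axioms T7, T8 translated through \eqref{eq:Interior}; the IA2 case uses the Lemma following Proposition \ref{def:equivalence of kernels}, part (2).

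For claim 2, given $\mathbb{H} \in \mathbb{HA}$ I would show $\mathbb{H}_+ = (\mathbb{D}, I, C)$ with $I := e_I\iota$ and $C := e_C\gamma$ lies in $\mathbb{A}$. The axioms T1--T4 for $I$ follow from H4(1) and H5: for instance T1 reads $e_I\iota(a\land b) = e_I(\iota(a)\cap\iota(b)) = e_I\iota(a)\land e_I\iota(b)$ using H4 and H3, and T2 is $IIa = e_I\iota e_I\iota(a) = e_I\iota(a) = Ia$ by the retraction $\iota e_I = \mathrm{id}$ in H5, while T3 is the counit inequality $e_I\iota(a)\le a$ coming from the adjunction $e_I\dashv\iota$. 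The compatibility $Ca = \neg I\neg a$ is precisely H6. The subclass axioms T5--T8 are then read off from H7--H10; T5 in particular will need the De Morgan structure of $\mathbb{L}$ from H7 to compute $CIa = Ia$.

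Claim 3 is where the main obstacle lies, as it requires the round-trip isomorphisms rather than mere membership. For $\mathbb{T} \cong (\mathbb{T}^+)_+$, unwinding the definitions shows $(\mathbb{T}^+)_+ = (\mathbb{L}, e_I\iota, e_C\gamma)$, and by \eqref{eq:Interior} we have $e_I\iota(a) = Ia$ and $e_C\gamma(a) = Ca$, so the underlying De Morgan algebra and the operation $I$ coincide on the nose; this is an identity, not merely an isomorphism. The genuinely delicate direction is $\mathbb{H} \cong (\mathbb{H}_+)^+$, where one must verify that the abstractly given kernels $\mathbb{L_I}, \mathbb{L_C}$ of $\mathbb{H}$ are isomorphic to the concretely constructed kernels $\mathbb{K}_\mathbb{I}, \mathbb{K}_\mathbb{C}$ of $\mathbb{H}_+$. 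The key point is that $\mathbb{K}_\mathbb{I} = \{Ia \mid a\in\mathrm{D}\} = \{e_I\iota(a)\mid a\in\mathrm{D}\} = e_I[\mathrm{L_I}]$ by surjectivity of $\iota$ (H5), so the isomorphism $\mathbb{L_I}\to\mathbb{K}_\mathbb{I}$ is the corestriction of the embedding $e_I$, with inverse $\iota$ restricted to $\mathbb{K}_\mathbb{I}$; injectivity and surjectivity follow from $\iota e_I = \mathrm{id}$ and the displayed description of $\mathbb{K}_\mathbb{I}$. I would then check that this bijection is a homomorphism by comparing the kernel operations K1--K5 with the lattice (and De Morgan) operations of $\mathbb{L_I}$, using H3, H4, and H6; the analogous argument handles $\mathbb{L_C}\cong\mathbb{K}_\mathbb{C}$, and one must finally confirm that the maps $e_I, e_C, \iota, \gamma$ of $(\mathbb{H}_+)^+$ agree with those of $\mathbb{H}$ under these identifications, which is a routine diagram chase.
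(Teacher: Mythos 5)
The paper states this proposition without proof, treating it as a direct consequence of the preceding development, and your proposal reconstructs exactly the intended argument: conditions H1--H10 for $\mathbb{T}^+$ are read off from Lemma \ref{prop: algebraic structure on kernels}, Propositions \ref{def:equivalence of kernels} and \ref{prop:dmba}, and equations \eqref{eq:adjuction}--\eqref{eq:Interior}; T1--T8 for $\mathbb{H}_+$ and the two round-trip isomorphisms follow from the adjunction and retraction identities in H5, with $\mathbb{L_I}\cong\mathbb{K}_\mathbb{I}$ witnessed by the corestriction of $e_I$ and inverse $\iota$. The outline is correct; the only cosmetic slip is attributing $CIa=Ia$ to the De Morgan structure in H7, when what is actually used is the identification $e_I=e_C$ (so that H5 yields $\gamma e_I=\mathrm{id}$ on the common kernel), though H7 is indeed the right hypothesis to invoke.
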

\subsection{Canonical extensions of heterogeneous algebras}
\label{ssec:canonical ext}
\label{Canonical extensions}
As discussed in other papers adopting the multi-type methodology, canonicity in the multi-type environment serves both to provide complete semantics for the analytic extensions of the basic logic (i.e.~extensions obtained by adding analytic inductive axioms) and to prove the conservativity  of their associated display calculi. In what follows, we let $\mathbb{D}^\delta$, $\mathbb{L}_\mathbb{I}^\delta$, and $\mathbb{L}_\mathbb{C}^\delta$ denote the canonical extensions of the algebras $\mathbb{D} $, $\mathbb{L}_\mathbb{I}$, and $\mathbb{L}_\mathbb{C}$ respectively, and $e_I^\delta$,  $e_C^\delta$, $\iota^\pi$, and $\gamma^\pi$  denote the extensions of $e_I$, $e_C$, $\iota$, and $\gamma$ respectively.\footnote{The order-theoretic properties of $e_I , e_C$, $\iota$ and $\gamma$ guarantee that they are {\em smooth}, that is, for each of them, $\sigma$-extension and $\pi$-extension coincide. However, the different notations in the superscripts are meant to emphasize that while the smoothness of the embeddings is used in the canonicity proofs, it is not needed in the case of  $\iota^\pi$ and $\gamma^\sigma$.}

\begin{definition}
\label{def: canonical heter}
If $\mathbb{H} =  (\mathbb{D}, \mathbb{L_I},\mathbb{L_C}, e_I, e_C, \iota, \gamma)\in \mathbb{HA}$ is an htqBa, then the {\em canonical extension} of $\mathbb{H}$ is the heterogeneous algebra $\mathbb{H^\delta} = (\mathbb{D}^\delta, \mathbb{K}_\mathbb{I}^\delta,  \mathbb{K}_\mathbb{C}^\delta, e_I^\delta , e_C^\delta, \iota^\pi, \gamma^\sigma)$.
\end{definition}
The following proposition is an immediate consequence of the fact that the defining conditions of the heterogeneous algebras of Definition \ref{def: heterogeneous algebras} can be expressed as analytic inductive inequalities (cf.~\cite[Definition 55]{greco2016unified}), and that each such inequality is canonical.
\begin{proposition}
\label{prop:canonical extensions}
If $\mathbb{H} \in \mathbb{HA}$, then $\mathbb{H^\delta}$ is a perfect element of $\mathbb{HA}$.
\end{proposition}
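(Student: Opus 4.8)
The plan is to prove Proposition \ref{prop:canonical extensions} by invoking the general canonicity theory for analytic inductive inequalities, after verifying that each defining condition of the heterogeneous algebras in Definition \ref{def: heterogeneous algebras} falls within the scope of that theory. Concretely, I would argue in two movements: first, that $\mathbb{H}^\delta$ is \emph{perfect}; second, that $\mathbb{H}^\delta$ remains a member of $\mathbb{HA}$, i.e.\ that it still satisfies all the relevant defining (in)equalities H1--H6 together with whichever of H7--H10 pin down the particular subclass.

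For perfection, I would first recall that the canonical extension of a bounded distributive lattice (expansion) is perfect, so that $\mathbb{D}^\delta$, $\mathbb{K}_\mathbb{I}^\delta$ and $\mathbb{K}_\mathbb{C}^\delta$ are perfect distributive lattices by construction, giving condition 1 in the definition of perfect heterogeneous algebra. For condition 2, I would note that each heterogeneous map in the signature is adjoint to another such map: by H5 we have $e_I\dashv\iota$ and $\gamma\dashv e_C$, and the embeddings $e_I,e_C$ are lattice homomorphisms (hence both join- and meet-preserving), while $\iota$ is meet-preserving and $\gamma$ is join-preserving by H4. The standard theory of canonical extensions guarantees that the $\sigma$- and $\pi$-extensions of maps preserving finite joins (resp.\ meets) are completely join- (resp.\ meet-) preserving; since every map in the signature preserves the relevant finite operations, its extension is completely so. This is exactly where the choice of $\iota^\pi$ and $\gamma^\sigma$ in Definition \ref{def: canonical heter} matters, and where the smoothness remarked in the footnote makes the two extensions of the embeddings coincide.

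For membership in $\mathbb{HA}$, the key observation—already flagged in the statement—is that each of H1--H6 (and H7--H10) can be written as an \emph{analytic inductive} inequality in the multi-type signature, and that every analytic inductive inequality is canonical, i.e.\ preserved under taking canonical extensions. Thus H1--H4 (lattice and homomorphism conditions, phrased as inequalities) transfer to $\mathbb{D}^\delta$, $\mathbb{K}_\mathbb{I}^\delta$, $\mathbb{K}_\mathbb{C}^\delta$ and their extended maps; the adjunction clauses $e_I\dashv\iota$, $\gamma\dashv e_C$ and the composites $\iota e_I=\mathrm{id}$, $\gamma e_C=\mathrm{id}$ in H5 are likewise canonical (adjunction and such fixpoint identities are standard analytic inductive shapes); and the De Morgan interaction H6 is an equality between two term functions built from order-preserving and order-reversing maps, again of analytic inductive form. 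The same argument applies verbatim to the extra conditions H7--H10 defining the subclasses, so that $\mathbb{H}^\delta$ lies in the same subclass as $\mathbb{H}$.

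The main obstacle I anticipate is not any single verification but the bookkeeping of confirming that \emph{all} the defining conditions genuinely fit the analytic inductive template of \cite[Definition 55]{greco2016unified}, particularly the mixed conditions such as H6 and H10 which couple the negation $\neg$ with the heterogeneous maps $e_I,e_C,\iota,\gamma$ across all three types. Since $\neg$ is order-reversing while the embeddings and projections are order-preserving, one must check that the syntactic polarity and connectedness constraints of the analytic inductive format are met so that the Ackermann-style canonicity machinery applies uniformly. Once this classification is confirmed—which the paper asserts preceding the statement—the proposition follows immediately by combining the perfection argument with the canonicity of each defining inequality, and no further computation is required.
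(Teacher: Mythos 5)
Your proposal is correct and follows essentially the same route as the paper, which states the proposition as an immediate consequence of the fact that the defining conditions of the heterogeneous algebras are expressible as analytic inductive inequalities and that such inequalities are canonical. Your elaboration of the perfection of the component lattices and the complete (join/meet) preservation of the extended maps, as well as your caveat about recasting the quasi-inequality H10 as an inequality, simply makes explicit what the paper leaves implicit.
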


{\fns
\begin{center}
\begin{tikzpicture}[node/.style={circle, draw, fill=black}, scale=1]
\node (A) at (-1.5,-1.25) {$\mathbb{L_I}$};
\node (A delta) at (-1.5,1.5) {$\mathbb{L_I}^{\delta}$};
\node (L) at (1.5,-1.25) {$\mathbb{D}$};
\node (L delta) at (1.5,1.5) {$\mathbb{D}^{\delta}$};
\node (B) at (4.5,-1.25) {$\mathbb{L_C}$};
\node (B delta) at (4.5,1.5) {$\mathbb{L_C}^{\delta}$};
\node (adj) at (0,-0.9) {{\rotatebox[origin=c]{270}{$\vdash$}}};
\node (adju) at (0,1.9) {{\rotatebox[origin=c]{270}{$\vdash$}}};
\node (adju') at (0,0.83) {{\rotatebox[origin=c]{270}{$\vdash$}}};
\node (adju') at (3,0.83) {{\rotatebox[origin=c]{90}{$\vdash$}}};
\node (adj) at (3,-0.9) {{\rotatebox[origin=c]{90}{$\vdash$}}};
\node (adju) at (3,1.9) {{\rotatebox[origin=c]{90}{$\vdash$}}};

\draw [->>] (L delta)  to[out= 225,in= 315, looseness=1]  node[below] {$\iota'$}  (A delta);
\draw [->>] (L delta)  to[in= 225,out= 315, looseness=1]  node[below] {$\gamma'$}  (B delta);

\draw [right hook->] (A) to  (A delta);
\draw [right hook->] (L)  to (L delta);
\draw [right hook->] (B) to  (B delta);
\draw [right hook->] (A)  to node[below] {$e_I$}  (L);
\draw [right hook->] (A delta)  to node[below] {$e_{I}^{\delta}$}  (L delta);
\draw [left hook->] (B)  to node[below] {$e_C$}  (L);
\draw [left hook->] (B delta)  to node[below] {$e_{C}^{\delta}$}  (L delta);
\draw [->>] (L delta) to [out=135,in=45, looseness=1]   node[above] {$\iota^{\pi}$}  (A delta);
\draw [->>] (L) to [out=135,in=45, looseness=1]   node[above] {$\iota$}  (A);
\draw [<<-] (B delta) to [out=135,in=45, looseness=1]   node[above] {$\gamma^{\sigma}$}  (L delta);
\draw [<<-] (B) to [out=135,in=45, looseness=1]   node[above] {$\gamma$}  (L);
\end{tikzpicture}

\textit {\textbf{Fig:}  Extending algebras to canonical extensions}
\end{center}
}
In Section \ref{ssec:soundness}, we prove that perfect elements of each class $\mathbb{HA}$ provide sound semantics for the multi-type calculus capturing the corresponding logic.

\section{Multi-type language for heterogeneous rough algebras}\label{sec: multi-type language}
 Heterogeneous algebras provide a natural interpretation for the following multi-type language $\mathcal{L}_{\mathrm{MT}}$ consisting of terms of types $\mathsf{D}$, $\mathsf{K_I}$ and $\mathsf{K_C}$.
$$\mathsf{D}\ni  A ::=  \,p \mid \, e_I(\alpha) \mid\, e_C(\xi) \mid\xtop \mid \xbot \mid A \xand A \mid A \xor A\mid\neg A $$
$$\mathsf{K_I}\ni \alpha ::= \, \iota(A)  \mid 1_I \mid 0_I \mid  \alpha \cup \alpha \mid  \alpha \cap \alpha  $$
$$\mathsf{K_C}\ni \xi ::= \, \gamma(A)  \mid 1_C\mid 0_C \mid \xi \sqcup \xi\mid  \xi \sqcap \xi.  $$
The logic $\mathrm{H.TQBA5}$ can be captured in a multi-type language consisting of the two  types $\mathsf{D}$ as above and $\mathsf{K}$ as follows:
$$\mathsf{K}\ni \alpha ::= \, \iota(A)\mid \gamma(A)  \mid 1 \mid 0 \mid{\sim}\alpha\mid \alpha \cup\alpha \mid  \alpha \cap \alpha . $$
The toggle between the single-type algebras and their corresponding heterogeneous algebras 
is reflected syntactically by the  translation $(\cdot)^t: \mathcal{L}\to \mathcal{L}_{\mathrm{MT}}$  defined as follows:
\begin{center}
\begin{tabular}{r c l c r c l r c l c r c l}
$p^t$ &$ = $& $p$&$\top^t$ &$ = $& $\xtop$ \\
$\bot^t$ &$ = $& $\xbot$&$(A\wedge B)^t$ &$ = $& $A^t \xand B^t$\\
$(A\vee B)^t$ &$ = $& $A^t \xor B^t$&$(\neg A)^t$ &$ = $& $\neg A^t$\\
$(IA)^t$ &$ = $& $e_I\iota (A^t)$&$(CA)^t$ &$ = $& $e_C\gamma( A^t)$&&\\
\end{tabular}
\end{center}
Recall that $\mathbb{T}^+$ denotes the heterogeneous algebra associated with the given algebra $\mathbb{T}$ (cf.~Definition \ref{def: Dplus}). The following proposition is proved by a routine induction on  $\mathcal{L}$-formulas.
\begin{proposition}
\label{prop:consequence preserved and reflected}
For all $\mathcal{L}$-formulas $A$ and $B$ and every  $\mathcal{L}$-algebra  $\mathbb{T}$,
$$\mathbb{T}\models A\leq B \quad \mbox{ iff }\quad \mathbb{T}^+ \models A^t\leq B^t.$$
\end{proposition}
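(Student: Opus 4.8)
The plan is to reduce the biconditional to a single interpretation lemma and then read off both directions at once. Recall from Definition \ref{def: Dplus} that the type-$\mathsf{D}$ component of $\mathbb{T}^+$ is literally the De Morgan reduct $\mathbb{L}$ of $\mathbb{T}$, so an assignment $h$ of the propositional variables $\mathsf{Atprop}$ into $\mathbb{T}$ is the very same datum as an assignment into the $\mathsf{D}$-domain of $\mathbb{T}^+$; moreover, since $\mathcal{L}_{\mathrm{MT}}$ has atomic terms only at type $\mathsf{D}$ (the kernel types being populated only by terms of the form $\iota(A)$, $\gamma(A)$, constants, and their lattice combinations, never by fresh atoms), every interpretation of $\mathcal{L}_{\mathrm{MT}}$ in $\mathbb{T}^+$ is uniquely determined by such an $h$. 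Thus the families of interpretations on the two sides are in natural bijection, and it suffices to prove, for each assignment $h$ and the multi-type interpretation $v$ it determines, that $h(A) = v(A^t)$ for every $\mathcal{L}$-formula $A$.

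First I would establish this equality by induction on the structure of $A$. The base case $A = p$ is immediate since $p^t = p$, and the cases $A \in \{\top, \bot\}$ hold because the translation sends these constants to their $\mathsf{D}$-type counterparts, interpreted by the same elements $\top, \bot$ of $\mathrm{L}$. For $A = B \xand C$, $A = B \xor C$, and $A = \neg B$, the translation commutes with the connective, i.e.\ $(B \wedge C)^t = B^t \xand C^t$ and so on, while the $\mathsf{D}$-type operations $\xand, \xor, \neg$ of $\mathbb{T}^+$ are by construction exactly the De Morgan operations of $\mathbb{L}$; hence the inductive hypothesis yields the result directly.

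The two cases that actually use the heterogeneous structure are $A = IB$ and $A = CB$. Here $(IB)^t = e_I\iota(B^t)$, so $v(A^t) = e_I(\iota(v(B^t)))$; by the inductive hypothesis $v(B^t) = h(B)$, and by the middle identity of \eqref{eq:Interior} we have $e_I(\iota(a)) = Ia$ for every $a \in \mathrm{L}$, whence $v(A^t) = I\,h(B) = h(IB) = h(A)$. The case $A = CB$ is identical, using $(CB)^t = e_C\gamma(B^t)$ and $e_C(\gamma(a)) = Ca$. With the interpretation lemma in hand the proposition follows: $\mathbb{T}\models A\leq B$ means $h(A)\leq h(B)$ for all $h$, which by the lemma is equivalent to $v(A^t)\leq v(B^t)$ for all corresponding $v$, i.e.\ to $\mathbb{T}^+\models A^t\leq B^t$, where the bijection between the $h$'s and the $v$'s is exactly what lets the universal quantifier transfer across the two signatures. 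The only genuine subtlety---hardly an obstacle, which is why the induction is routine---is this preliminary identification of interpretations together with the two identities in \eqref{eq:Interior}, which are precisely what was arranged when $\mathbb{T}^+$ was defined so that $e_I\iota$ and $e_C\gamma$ recover $I$ and $C$ on $\mathrm{L}$.
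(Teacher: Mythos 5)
Your proposal is correct and is exactly the ``routine induction on $\mathcal{L}$-formulas'' that the paper invokes without spelling out: the identification of assignments via the shared $\mathsf{D}$-domain, the inductive interpretation lemma $h(A)=v(A^t)$, and the appeal to the identities $e_I(\iota(a))=Ia$ and $e_C(\gamma(a))=Ca$ from \eqref{eq:Interior} in the $I$ and $C$ cases are precisely the intended argument. No discrepancy to report.
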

We are now in a position to translate the axioms and rules of any logic $\mathrm{H}$ defined in Section \ref{ssec:logics} into $\mathcal{L}_{\mathrm{MT}}$.
\begin{center}
\begin{tabular}{rllc}
$Ia \fCenter a$ & $\rightsquigarrow$ &$e_I\iota a\leq a$ & (i)\\
$\top\fCenter I\top$& $\rightsquigarrow$ &$\top\leq e_I\iota(\top)$& (ii)\\
$I(a\land b)\fCenter Ia\land Ib$& $\rightsquigarrow$ &$e_I\iota(a \land b) \leq e_I\iota (a) \land e_I\iota (b)$& (iii)\\
$ Ia \land Ib \fCenter I(a\land b)$ & $\rightsquigarrow$ &$e_I\iota (a) \land e_I\iota(b) \leq e_I\iota(a \land b)$ & (iv)\\
$Ia\fCenter IIA$&$\rightsquigarrow$ &$e_I\iota(a) \leq e_I\iota e_I\iota(a)$ & (v)\\
$CIa\fCenter Ia$ &$\rightsquigarrow$ & $e_C\gamma (e_I\iota (a)) \leq e_I\iota (a)$& (vi)\\
$I(a\lor b) \fCenter Ia\lor Ib$&$\rightsquigarrow$ & $e_I\iota(a\lor b) \leq e_I\iota (a) \lor e_I\iota (b)$ & (vii)\\
 $Ia \lor Ib \fCenter I(a \lor b)$&$\rightsquigarrow$ & $e_I\iota(a) \lor e_I\iota(b) \leq e_I\iota(a \lor b)$& (viii)\\
 $\top \fCenter Ia\lor \neg Ia$&$\rightsquigarrow$ &$\top \leq e_I\iota(a) \lor \neg e_I\iota (a)$& (ix)\\
\AX $IA \fCenter IB$
\AX $CA \fCenter CB$
\BI  $A \fCenter B$
\DP &$\rightsquigarrow$ &$e_I\iota(a)\leq e_I\iota(b)$ and $e_C\gamma(a) \leq e_C\gamma(b)$ implies $a \leq b$&(x)
 \end{tabular}
 \end{center}
 Since $e_I$ and $e_C$ are order-embeddings, $e_I\iota(a)\leq e_I\iota(b)$ and $e_C\gamma(a) \leq e_C\gamma(b)$ are respectively equivalent to $\iota(a)\leq \iota(b)$ and $\gamma(a) \leq \gamma(b)$, and hence the quasi-inequality (x) can be equivalently rewritten as the following quasi-inequality, which defines the class $\mathbb{HIA}3$:
 \[\iota(a)\leq \iota(b)\mbox{ and }\gamma(a) \leq \gamma(b)\mbox{ implies }a \leq b.\]
 By applying adjunction, the inequalities in the antecedent can be equivalently rewritten as $a = e_C(\gamma(b))\wedge a$ and $b = b\vee e_I(\iota (a))$. Hence, the initial quasi-inequality can be equivalently rewritten as the following $\mathcal{L}_{\mathrm{MT}}$-inequality:
 \begin{equation}\label{eq:IA3rule}
 a \land e_C\gamma(b) \leq e_I\iota (a) \lor b.
 \end{equation}
The inequality above is analytic inductive, and hence it can be used, together with the other axioms of heterogeneous algebras, which, as observed in Section \ref{ssec:canonical ext}, are  analytic inductive, to generate the analytic structural rules of the calculi introduced in Section \ref{ssec:Display calculus}, with a methodology analogous to the one introduced in \cite{greco2016unified}. As we will discuss in Section \ref{ssec: completeness}, the inequalities (i)-(ix) are derivable in the appropriate calculi obtained in this way.
\section{Proper display calculi for the logics of rough algebras}
\label{ssec:Display calculus}
In the present section, we introduce  proper multi-type display calculi $\mathrm{D.A}$ for the logics associated with each class of algebras $\mathbb{A}$ mentioned in Section \ref{ssec: varieties}.  The language of these calculi has types $\mathsf{D}$ and $\mathsf{K_I}$ and $\mathsf{K_C}$, and  is built up from structural and operational (aka logical) connectives. Heterogeneous connectives $\wdia, \wbox, \bboxr, \bdial$ are interpreted as  $e_C, e_I, \iota, \gamma$ in heterogeneous algebras respectively. Each structural connective is denoted by decorating its corresponding logical connective  with $\hat{\phantom{a}}$ (resp.~$\check{\phantom{a}}$ or $\tilde{\phantom{a}}$).  
In what follows, we will adopt the convention that unary connectives bind more strongly than binary ones.
\subsection{Language}
\label{ssec:language of DtqBa}
\begin{itemize}
\item Structural and operational terms:
\end{itemize}
\begin{center}
\begin{tabular}{l}
$\mathsf{\phantom{_\wn}D} \left\{\begin{array}{l}
A  ::= \,p \mid \xtop \mid \xbot \mid \wbox \alpha \mid \wdia\xi \mid  \xneg A \mid A \land A \mid A \lor A \\
 \\
X ::= \,A \mid  \XBOT \mid \XTOP \mid \WBOX\Pi \mid \WDIA\Gamma  \mid  \XNEG X \mid  X \XAND X  \mid X \XOR X \mid X \XCRARR X \mid X \XRARR X
\end{array} \right.$
 \\
 \\
$\mathsf{K_I} \left\{\begin{array}{l}
\alpha ::= \,\bboxr A \mid \Itop  \mid \Ibot \mid \alpha \Iand \alpha \mid \alpha \Ior \alpha \mid (\Ineg\alpha)\\
\\
\Gamma ::= \alpha \mid \BBOXR X \mid \BBOXL X \mid \IBOT   \mid  \ITOP  \mid \Gamma \IAND \Gamma  \mid \Gamma \IOR \Gamma \mid \Gamma \ICRARR \Gamma  \mid \Gamma \IRARR \Gamma \mid (\tilde\INEG\Gamma) \\
\end{array} \right.$
 \\
 \\
$\mathsf{K_C} \left\{\begin{array}{l}
\xi ::= \,\bdial A \mid  \Ctop  \mid \Cbot \mid  \xi \Cand \xi \mid \xi \Cor \xi \mid (-\xi) \\
\\
\Pi ::=\, \xi \mid \BDIAL X \mid \BDIAR X \mid  \CBOT \mid \CTOP  \mid   \Pi \CAND \Pi  \mid \Pi \COR \Pi  \mid \Pi \CRARR \Pi \mid \Pi \CCRARR \Pi \mid (\tilde{-} \xi) \\
\end{array} \right.$
 \\
\end{tabular}
\end{center}

The formulas and structures in brackets  in the table above pertain to the language of $\mathrm{D.TQBA5}$ and its extensions.
\begin{itemize}
\item Interpretation of structural connectives as their logical counterparts\footnote{ In the synoptic table, the operational symbols which occur only at the structural level will appear between round brackets.}
\end{itemize}

\begin{enumerate}
\item structural and operational pure $\mathsf{D}$-type connectives:
{\fns
\begin{center}
\begin{tabular}{|r|c|c|c|c|c|c|c|c|}
\hline
structural operations&$\XTOP$ &$\XBOT$  & $\XAND$& $\XOR$ & $ \XNEG$ &$\XCRARR$ & $\XRARR$ \\
\hline
logical operations &$\xtop$ & $\xbot$ & $\xand$ & $\xor$ & $\xneg$ & $(\xcrarr)$& $(\xrarr)$ \\
\hline
\end{tabular}
\end{center}
}
\item structural and operational pure $\mathsf{K_I}$-type and $\mathsf{K_C}$-type connectives:
{\fns
\begin{center}
\begin{tabular}{|r|c|c|c|c|c|c||c|c|c|c|c|c|}
\hline
structural operations&$\ITOP$ &$\IBOT$  & $\IAND$& $\IOR$ &$\ICRARR$ & $\IRARR$ &$\CTOP$&$\CBOT$&$\hat{\sqcup}$&$\check{\sqcap}$&$\CRARR$&$\CCRARR$\\
\hline
logical operations &$\Itop$ & $\Ibot$ & $\Iand$ & $\Ior$ & $(\Icrarr)$& $(\Irarr)$&$\Ctop$&$\Cbot$&$\Cor$&$\Cand$&($\Crarr$)&($\Ccrarr$)\\
\hline
\end{tabular}
\end{center}
}
\item  As mentioned above, the language of $\mathrm{D.TQBA5}$ and its extensions includes the following structural and operational pure $\mathsf{K_I}$-type and $\mathsf{K_C}$-type connectives:
{\fns
\begin{center}
\begin{tabular}{|r|c|c|}
\hline
structural operations& $\tilde\INEG$  &$\tilde{-}$\\
\hline
logical operations & $\Ineg$&$-$\\
\hline
\end{tabular}
\end{center}
}
\item structural and operational multi-type connectives, and their algebraic counterparts:
{\fns
\begin{center}
\begin{tabular}{|r|c|c|c|c|c|c|c|c|c|}
\hline
types& \mc{2}{c|}{$\mathsf{D} \rightarrow \mathsf{K_I}$} &\mc{2}{c|}{$\mathsf{D} \rightarrow \mathsf{K_C}$} &$\mathsf{K_I} \rightarrow \mathsf{D}$&$\mathsf{K_C} \rightarrow \mathsf{D}$ \\
\hline
structural operations &$\BBOXL$& $\BBOXR$ &$\BDIAL$ &\BDIAR &$\WDIA$& $\WBOX$\\
\hline
logical operations &$(\bboxl)$&$\bboxr$ & $\bdial$ &$(\bdiar)$& $\wdia$ & $\wbox$\\
\hline
algebraic counterparts &$\iota'$&$\iota^\pi$ & $\gamma^\sigma$ &$\gamma'$& $e^\delta_I$ & $e^\delta_C$\\
\hline
\end{tabular}
\end{center}
}
\end{enumerate}

\subsection{Rules}
\label{ssec:rules}

In what follows, we will use $X, Y, W, Z$ as structural $\mathsf{D}$-variables, $\Gamma, \Delta, \Lambda$ as structural $\mathsf{K}_I$-variables, and $\Pi, \Sigma,\Omega$ as structural $\mathsf{K}_C$-variables.
The proper multi-type display calculus $\mathrm{D.TQBA}$ includes the following axiom and rules:
\begin{itemize}
\item Identity and Cut:
{\fns
\begin{center}
\begin{tabular}{llll}
\AXC{$$}
\LL{\fns $Id_\mathsf{D}$}
\UI $p \fCenter p$
\DP
&
\AX $X \fCenter A$
\AX $A \fCenter Y$
\RL{\fns $Cut_\mathsf{D}$}
\BI $X \fCenter Y$
\DP
&
\AX $\Gamma \fCenter \alpha$
\AX $\alpha \fCenter \Delta$
\LL{\fns $Cut_\mathsf{K_I}$}
\BI $\Gamma \fCenter \Delta$
\DP
&
\AX $\Pi \fCenter \xi$
\AX $\xi \fCenter \Sigma$
\RL{\fns $Cut_\mathsf{K_C}$}
\BI $\Pi \fCenter \Sigma$
\DP
\end{tabular}
\end{center}
}
\item Pure $\mathsf{D}$-type display rules:
{\fns
\begin{center}
\begin{tabular}{llll}

\AX $X \XAND Y \fCenter Z$
\LeftLabel{\scriptsize $res_\mathsf{D}$}
\doubleLine
\UI $Y \fCenter X \XRARR Z$
\DisplayProof
&
\AX $X \fCenter Y \XOR Z $
\RightLabel{\scriptsize $res_\mathsf{D}$}
\doubleLine
\UI$Y \XCRARR X \fCenter Z$
\DisplayProof
&
\AX $\XNEG X \fCenter Y$
\LeftLabel{\scriptsize $gal_\mathsf{D}$}
\doubleLine
\UI$\XNEG Y \fCenter X$
\DisplayProof
&
\AX $X \fCenter \XNEG Y$
\RightLabel{\scriptsize $gal_\mathsf{D}$}
\doubleLine
\UI  $ Y \fCenter \XNEG X$
\DisplayProof
\\
\end{tabular}
\end{center}
}
\item Pure $\mathsf{K_I}$-type and $\mathsf{K_C}$-type display rules:
{\fns
\begin{center}
\begin{tabular}{lclclcl}

\AX $\Gamma \IAND \Delta \fCenter \Lambda$
\LeftLabel{\scriptsize $res_\mathsf{K_I}$}
\doubleLine
\UI $\Delta \fCenter \Gamma \IRARR \Lambda$
\DisplayProof
&
&
\AX $\Gamma \fCenter \Delta \IOR \Lambda $
\RightLabel{\scriptsize $res_\mathsf{K_I}$}
\doubleLine
\UI$\Delta \ICRARR \Gamma \fCenter \Lambda$
\DisplayProof
&
&
\AX $\Pi \CAND \Sigma \fCenter \Omega$
\LeftLabel{\scriptsize $res_\mathsf{K_C}$}
\doubleLine
\UI $\Sigma \fCenter \Pi \CRARR \Omega$
\DisplayProof
&
&
\AX $\Pi \fCenter \Sigma \COR \Omega $
\RightLabel{\scriptsize $res_\mathsf{K_C}$}
\doubleLine
\UI$\Sigma \CCRARR \Pi \fCenter \Omega$
\DisplayProof
\end{tabular}
\end{center}
}
\item Multi-type display rules:
{\fns
\begin{center}
\begin{tabular}{llll}

\AX $\WDIA\Gamma  \fCenter Y$
\LeftLabel{\scriptsize $ad_\mathsf{DK_I}$}
\doubleLine
\UI $\Gamma \fCenter \BBOXR Y$
\DisplayProof
&
\AX $Y \fCenter \WDIA \Gamma$
\RightLabel{\scriptsize $ad_\mathsf{DK_I}$}
\doubleLine
\UI $\BBOXL Y \fCenter \Gamma$
\DisplayProof
&
\AX $Y \fCenter \WBOX \Pi$
\LeftLabel{\scriptsize $ad_\mathsf{DK_C}$}
\doubleLine
\UI$\BDIAL Y \fCenter \Pi$
\DisplayProof
&
\AX $\WBOX X \fCenter \Pi$
\RightLabel{\scriptsize $ad_\mathsf{DK_C}$}
\doubleLine
\UI$X \fCenter \BDIAR \Pi$
\DisplayProof
\end{tabular}
\end{center}
}
\item  Pure-type structural rules: these include standard Weakening (W), Contraction (C), Commutativity (E) and Associativity (A) in each type. We do not report on them.\footnote{In what follows, we use subscripts (indicating the type) to distinguish the rules for lattice operators in different type rules.}
{\fns
\begin{center}
\AX $X \fCenter  Y $
\RightLabel{\scriptsize $cont$}
\doubleLine
\UI $\XNEG Y \fCenter  \XNEG X$
\DP
\end{center}

\begin{center}
\begin{tabular}{llllll}
\AX $X  \fCenter Y$
\LeftLabel{\scriptsize $\XTOP$}
\doubleLine
\UI $X \XAND \XTOP   \fCenter Y$
\DisplayProof
&
\AX $X \fCenter  Y $
\RightLabel{\scriptsize $\XBOT$}
\doubleLine
\UI $X \fCenter  Y \XOR \XBOT$
\DisplayProof
&
\AX $\Gamma \fCenter \Delta$
\LeftLabel{\scriptsize $\ITOP$}
\doubleLine
\UI $\Gamma \IAND \ITOP   \fCenter \Delta$
\DisplayProof
&
\AX $\Gamma \fCenter  \Delta $
\RightLabel{\scriptsize $\IBOT$}
\doubleLine
\UI $\Gamma \fCenter  \Delta \IOR \IBOT$
\DisplayProof
&
\AX $\Pi  \fCenter \Sigma$
\LeftLabel{\scriptsize $\CTOP$}
\doubleLine
\UI $\Pi \CAND \CTOP   \fCenter \Sigma$
\DisplayProof
&

\AX $\Pi \fCenter  \Sigma $
\RightLabel{\scriptsize $\CBOT$}
\doubleLine
\UI $\Pi \fCenter \Sigma \COR \CBOT$
\DisplayProof

\end{tabular}
\end{center}
}
\item Multi-type structural rules:
{\fns
\begin{center}
\begin{tabular}{llll}
\AX $\XTOP \fCenter Y$
\LeftLabel{\scriptsize $\WDIA\ITOP$}
\doubleLine
\UI $\WDIA\ITOP \fCenter Y$
\DP
&
\AX $\Gamma  \fCenter \BBOXR \XBOT$
\RL{\scriptsize $\BBOXR\XTOP$}
\doubleLine
\UI $\Gamma  \fCenter \IBOT$
\DisplayProof
&
\AX $\BDIAL\XTOP \fCenter \Pi$
\LeftLabel{\scriptsize $\BDIAL\XTOP$}
\doubleLine
\UI $\CTOP \fCenter \Pi$
\DP
&
\AX $X  \fCenter  \XBOT$
\RL{\scriptsize $\WBOX\CBOT$}
\doubleLine
\UI $X  \fCenter \WBOX\CBOT$
\DisplayProof
\\
\\
\AX $\WDIA \Gamma \fCenter \WDIA \Delta$
\LeftLabel{\scriptsize $\WDIA$}
\doubleLine
\UI $\Gamma \fCenter \Delta$
\DP
&
\AX $\WBOX\Pi \fCenter \WBOX\Sigma$
\RL{\scriptsize $\WBOX$}
\doubleLine
\UI $\Pi \fCenter \Sigma$
\DP
&
\AX$\BDIAL X \fCenter \BDIAR Y$
\LL{\fns $\BDIAL\BDIAR$}
\UI$X \fCenter Y$
\DP
&
\AX$\BBOXL X \fCenter \BBOXR Y$
\RL{\fns $\BBOXL\BBOXR$}
\UI$X \fCenter Y$
\DP

\\
\\
\AX $X \fCenter \WDIA\BBOXR\XNEG Y$
\LL{\fns $IC$}
\doubleLine
\UI  $X \fCenter \XNEG\WBOX\BDIAL Y$
\DP
&
\AX $X \fCenter \WBOX\BDIAR\XNEG Y$
\RL{\fns $IC$}
\doubleLine
\UI  $X \fCenter \XNEG\WDIA\BBOXL Y$
\DP
&
\AX $\WBOX\BDIAL\XNEG X \fCenter  Y$
\LL{\fns $CI$}
\doubleLine
\UI  $\XNEG\WDIA\BBOXR X \fCenter  Y$
\DP
&
\AX $\WDIA\BBOXL\XNEG X \fCenter  Y$
\RL{\fns $CI$}
\doubleLine
\UI  $\XNEG\WBOX\BDIAR X \fCenter  Y$
\DP
\\
\\


\end{tabular}
\end{center}
}
\item Operational rules: those for the pure-type connectives are standard and omitted; those for multi-type connectives:
{\fns
\begin{center}
\begin{tabular}{lllll}
\AX $\BBOXL A\fCenter \Gamma$
\LL{\fns $\bboxl$}
\UI$\bboxl A \fCenter \Gamma$
\DP
&
\AX $\Gamma \fCenter A$
\RL{\fns $\bboxl$}
\UI$\BBOXL \Gamma \fCenter \bboxl A$
\DP
&
\AX $\Gamma \fCenter \BBOXR A$
\RL{\fns $\bboxr$}
\UI$\Gamma \fCenter \bboxr A$
\DP
&
\AX $A \fCenter \Gamma$
\LL{\fns $\bboxr$}
\UI$\bboxr A \fCenter \BBOXR \Gamma$
\DP
&
\\
\\
\AX $\BDIAL  A\fCenter \Pi$
\LL{\fns $\bdial$}
\UI$ \bdial A \fCenter \Pi$
\DP
&
\AX $\Pi \fCenter A$
\RL{\fns $\bdial$}
\UI$\BDIAL\Pi \fCenter \bdial A$
\DP
&
\AX $\Pi \fCenter \BDIAR A$
\RL{\fns $\bdiar$}
\UI$\Pi \fCenter \bdiar A$
\DP
&
\AX $A \fCenter \Pi$
\LL{\fns $\bdiar$}
\UI$\bdiar A \fCenter \BDIAR \Pi$
\DP
\\
\\
\AX $X \fCenter \WDIA \alpha$
\RL{\fns $\wdia$}
\UI$X \fCenter \wdia \alpha$
\DP
&
\AX$ \WDIA \alpha\fCenter X$
\LL{\fns $\wdia$}
\UI$ \wdia \alpha\fCenter X$
\DP
&
\AX $X \fCenter \WBOX \xi$
\RL{\fns $\wbox$}
\UI$X \fCenter \wbox \xi$
\DP
&
\AX$ \WBOX \xi \fCenter X$
\LL{\fns $\wbox$}
\UI$ \wbox \xi \fCenter X$
\DP
&
\end{tabular}
\end{center}
}
\end{itemize}

The calculus $\mathrm{D.TQBA5}$ is obtained by adding the following  rules to $\mathrm{D.TQBA}$:

\begin{itemize}
\item Display rules:
{\fns
\begin{center}
\begin{tabular}{llll}
\AX $\tilde\INEG \Gamma \fCenter \Delta$
\LeftLabel{\scriptsize $gal_\mathsf{K_I}$}
\doubleLine
\UI$\tilde\INEG \Delta \fCenter \Gamma$
\DisplayProof
&
\AX $\Gamma \fCenter \tilde\INEG \Delta$
\RightLabel{\scriptsize $gal_\mathsf{K_I}$}
\doubleLine
\UI  $ \Delta \fCenter \tilde\INEG \Gamma$
\DisplayProof
&
\AX $\CNEG \Pi \fCenter \Sigma$
\LeftLabel{\scriptsize $gal_\mathsf{K_C}$}
\doubleLine
\UI$\CNEG \Sigma \fCenter \Pi$
\DisplayProof
&
\AX $\Pi\fCenter \CNEG \Sigma$
\RightLabel{\scriptsize $gal_\mathsf{K_C}$}
\doubleLine
\UI  $ \Sigma \fCenter \CNEG \Pi$
\DisplayProof
\end{tabular}
\end{center}
}
\item Pure $\mathsf{K_I}$-type  and $\mathsf{K_C}$-type structural rules:
{\fns
\begin{center}
\begin{tabular}{ll}
\AX $\Gamma \fCenter  \Delta$
\RightLabel{\scriptsize $cont_I$}
\doubleLine
\UI $\tilde\INEG \Delta \fCenter  \tilde \INEG \Gamma$
\DP
&
\AX $\Pi \fCenter  \Sigma$
\RightLabel{\scriptsize $cont_C$}
\doubleLine
\UI $\CNEG \Sigma \fCenter  \CNEG \Pi$
\DP
\end{tabular}
\end{center}
}
\item Multi-type structural rules:
{\fns
\begin{center}
\begin{tabular}{llll}
\AX $X \fCenter \WDIA\BBOXR Y$
\RL{\scriptsize $\WDIA\BBOXR$}
\doubleLine
\UI $X \fCenter \WBOX\BDIAR Y$
\DisplayProof
&
\AX $X \fCenter  \WDIA\tilde\INEG \Gamma $
\RightLabel{\scriptsize ${\WDIA\tilde\INEG}$}
\doubleLine
\UI $X \fCenter  \XNEG\WDIA \Gamma$
\DP
&
\AX $X \fCenter  \WBOX\CNEG \Pi$
\RightLabel{\scriptsize ${\WBOX\CNEG}$}
\doubleLine
\UI $X \fCenter  \XNEG\WBOX \Pi$
\DP
\\
\end{tabular}
\end{center}

}

\item Additional operational rules for $\Ineg$ and $\Cneg$:
{\fns
\begin{center}
\begin{tabular}{llll}
\AX $\Gamma \fCenter \tilde\INEG\alpha $
\UI $\Gamma \fCenter \Ineg\alpha$
\DisplayProof
&
\AX $\tilde\INEG\alpha \fCenter \Gamma$
\UI $\Ineg\alpha\fCenter \Gamma$
\DisplayProof
&
\AX $\Pi \fCenter \CNEG\xi $
\UI $\Pi \fCenter \Cneg\xi$
\DisplayProof
&
\AX $\CNEG\alpha \fCenter \xi$
\UI $\Cneg\alpha\fCenter \xi$
\DisplayProof
\\
\end{tabular}
\end{center}

}

The proper display calculi for the axiomatic extensions of $\mathrm{H.TQBA5}$ discussed in \ref{ssec:logics} is obtained in following way.
\end{itemize}
{\fns
\begin{center}
\begin{tabular}{|c|c|p{7.2cm}|}
\hline
Name of logic&Display Calculus &\mc{1}{c|}{Rules}\\
\hline
$\mathrm{H.IA1}$& $\mathrm{D.IA1}$& \rule{0pt}{5ex}\noindent \begin{tabular}{ll}
 \AX $\Gamma \IAND \Delta \fCenter \Lambda$
\LeftLabel{\scriptsize $cgri$}
\UI $\Delta \fCenter \tilde\INEG\Gamma \IOR  \Lambda$
\DisplayProof
&
 \AX $\Pi \CAND \Sigma \fCenter \Omega$
\RL{\scriptsize $cgri$}
\UI $\Sigma \fCenter \CNEG\Pi \COR  \Omega$
\DisplayProof
\end{tabular}

\\
 \hline
$\mathrm{H.IA2}$&$\mathrm{D.IA2}$&\rule{0pt}{5ex}\noindent  \begin{tabular}{ll}
 \AX $\Gamma \fCenter \BBOXR (X \XOR Y)$
\RL{\fns $\BBOXR\IOR$}
\UI $\Gamma \fCenter\BBOXR X \IOR \BBOXR Y$
\DP
&
 \AX $\BDIAL (X \XAND Y) \fCenter \Pi$
\LL{\fns $\BDIAL\XAND$}
\UI $\BDIAL X \CAND \BDIAL Y \fCenter \Pi$
\DP
\end{tabular}

\\
 \hline
$\mathrm{H.IA3}$&$\mathrm{D.IA3}$& \rule{0pt}{5ex}\noindent
\AX $X \fCenter Y$
\AX $W \fCenter Z$
\RightLabel{\scriptsize $ia3$}
\BI $X \XAND\WBOX\BDIAL W \fCenter \WDIA\BBOXR Y\XOR Z$
\DP

\\
 \hline
 $\mathrm{H.PRA}$ &$\mathrm{D.PRA}$ &$cgri$, $\BDIAL\XAND$, $\BBOXR\XOR$, $pra$.\\
 \hline
 \end{tabular}
 \end{center}
}
\section{Properties}
Throughout this section, we let $\mathrm{H}$ denote any of the logics defined in Section \ref{ssec:logics}; let $\mathbb{A}$  and $\mathbb{HA}$ denote its corresponding class of single-type and heterogeneous algebras, respectively, and let $\mathrm{D.A}$ denote the  display calculus for $\mathrm{H}$.
\label{ssec: properties}
\subsection{Soundness for perfect $\mathbb{HA}$ algebras}
\label{ssec:soundness}
In the present subsection, we outline the  verification of the soundness of the rules of $\mathrm{D.A} $ w.r.t.~the semantics of {\em perfect} elements of $\mathbb{HA}$ (see Definition \ref{def: heterogeneous algebras}). The first step consists in interpreting structural symbols as logical symbols according to their (precedent or succedent) position, as indicated at the beginning of Section \ref{ssec:Display calculus}. This makes it possible to interpret sequents as inequalities, and rules as quasi-inequalities. For example, the rules on the left-hand side below are interpreted as the quasi-inequalities on the right-hand side:

\begin{center}
\begin{tabular}{lll}
\AX $X \fCenter Y$
\AX $W \fCenter Z$
\RightLabel{\scriptsize $pra$}
\BI $X \XAND\WBOX\BDIAL W \fCenter \WDIA\BBOXR Y\XOR Z$
\DP
&$\quad\rightsquigarrow\quad$&
$\forall a\forall b \forall c \forall d [(a \leq c \ \&\ b \leq d) \Rightarrow a \land e_C\gamma(b) \leq e_I\iota(c) \lor d].$
\end{tabular}
\end{center}
The verification of the soundness of the rules of $\mathrm{D.A}$ then consists in verifying the validity of their corresponding quasi-inequalities in any perfect element  of $\mathbb{HA}$. The verification of the soundness of pure-type rules and of the introduction rules following this procedure is routine, and is omitted.
The soundness of the rule $pra$ above is verified by the following ALBA-reduction, which shows that the quasi-inequality above is equivalent  to the inequality \eqref{eq:IA3rule}, which, as discussed in Section \ref{sec: multi-type language}, is valid on every $\mathbb{H}\in \mathbb{HIA}3$.
\begin{center}
\begin{tabular}{r l l}
   & $\forall p\forall q  [ p \land e_C\gamma(q) \leq e_I\iota (p) \lor q]$ &\\
iff & $\forall p\forall q\forall a\forall b \forall c \forall d   [ (a\leq p \ \& \ b\leq q\ \& \ p\leq c \ \&\ q\leq d)\Rightarrow a \land e_C\gamma(b) \leq e_I\iota (c) \lor d]$ &\\
iff & $\forall a\forall b \forall c \forall d   [ (a\leq c \ \& \ b\leq d)\Rightarrow a \land e_C\gamma(b) \leq e_I\iota (c) \lor d]$. &\\
\end{tabular}
\end{center}
The validity of the quasi-inequalities corresponding to the remaining  structural rules follows in an analogous way.

\subsection{Completeness}\label{ssec: completeness}
 Let  $A^\tau\vdash B^\tau$ be the translation of any $\mathcal{L}$-sequent $A\vdash B$  into the language of $\mathrm{D.A}$ which composes the  translation introduced in Section \ref{sec: multi-type language} with the correspondence between algebraic operations  and logical connectives indicated in table (iv) of Section \ref{ssec:language of DtqBa}.

\begin{proposition}
For every $\mathrm{H}$-derivable sequent $A \fCenter B$, the sequent $A^{\tau} \fCenter B^{\tau}$ is derivable in $\mathrm{D.A}$.
\end{proposition}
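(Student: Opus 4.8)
The plan is to argue by induction on the height of a derivation of $A \fCenter B$ in $\mathrm{H}$. Since $\mathrm{H}$ is presented as De Morgan logic augmented by the axioms and rules of Section \ref{ssec:logics}, it suffices to establish two things: that the $(\cdot)^\tau$-translation of every axiom of $\mathrm{H}$ is $\mathrm{D.A}$-derivable (the base cases), and that every inference rule of $\mathrm{H}$ is simulated by a $\mathrm{D.A}$-derivation on the translated premisses and conclusion (the inductive steps). Throughout I would use that $(IA)^\tau$ and $(CA)^\tau$ are built from $A^\tau$ by the operational connectives realizing $e_I\iota$ and $e_C\gamma$ respectively, so that every occurrence of $I$ or $C$ becomes a two-step composition of a kernel-type and a multi-type connective.

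I would dispatch the inductive steps first, as they are the routine part. Reflexivity $A \fCenter A$ of $\mathrm{H}$ is handled by the standard display-calculus lemma that $C \fCenter C$ is derivable for every formula $C$ of each type, proved by a subsidiary induction on $C$ using the operational rules together with the three identity axioms $Id_\mathsf{D}$, $Id_\mathsf{K_I}$, $Id_\mathsf{K_C}$. Transitivity (the cut rule of $\mathrm{H}$) is simulated directly by $Cut_\mathsf{D}$. The congruence/monotonicity rules for $\xand,\xor,\xneg$ are simulated by the corresponding pure $\mathsf{D}$-type operational rules, and those for $I$ and $C$ by the operational rules for the heterogeneous connectives together with those for $e_I,e_C$ and $\iota,\gamma$. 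The only non-trivial inductive case is the binary rule (x) of $\mathrm{H.IA3}$, which I would simulate by displaying its two translated premisses $X\fCenter Y$ and $W\fCenter Z$ and applying the structural rule $ia3$.

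For the base cases, the De Morgan-logic axioms translate into sequents of the pure $\mathsf{D}$-type fragment of $\mathrm{D.A}$, which is a notational variant of the proper display calculus for De Morgan logic, so their derivability is inherited. The remaining axioms are exactly those whose $\mathcal{L}_{\mathrm{MT}}$-translations are the inequalities (i)--(ix) (and, for $\mathrm{IA3}$, the quasi-inequality (x) reformulated as \eqref{eq:IA3rule}), and for each I would exhibit an explicit derivation. The two governing tools are the adjunction display rules $ad_\mathsf{DK_I}$, $ad_\mathsf{DK_C}$, encoding $e_I\dashv\iota$ and $\gamma\dashv e_C$, which let one pass between the $\mathsf{D}$-type and the kernel types, and the operational rules introducing the heterogeneous connectives. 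Thus $Ia\fCenter a$ comes from a kernel identity by displaying and re-introducing via one adjunction step; $Ia\fCenter IIa$ uses the unit behaviour $\iota e_I=\mathrm{id}$ internalised by $\BBOXL\BBOXR$ / $\WDIA$; the meet axioms use the lattice-homomorphism rules together with the adjunctions; and the negation-interaction axioms $CA\dashv\vdash\xneg I\xneg A$ and $\xneg C\xneg A\dashv\vdash IA$ are derived from the multi-type structural rules $IC$ and $CI$ combined with $gal_\mathsf{D}$. For the extensions, the translated instances of T5--T8 are derived from the rules added in each calculus: $gal_\mathsf{K_I},gal_\mathsf{K_C}$ and $\WDIA\BBOXR$ in $\mathrm{D.TQBA5}$, the rule $cgri$ in $\mathrm{D.IA1}$, the rules $\BBOXR\XOR$ / $\BDIAL\XAND$ in $\mathrm{D.IA2}$, and $ia3$ / $pra$ in $\mathrm{D.IA3}$ and $\mathrm{D.PRA}$, the latter matching \eqref{eq:IA3rule} on the nose.

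The hard part will be assembling correct explicit derivations for the negation-interaction axioms and for the translated T5--T8: these are precisely the places where several display postulates together with the non-standard multi-type structural rules ($IC$, $CI$, $\WDIA\BBOXR$, $cgri$, $ia3$) must be chained in exactly the right order, keeping each intermediate sequent displayed so that the next rule is applicable. By contrast, I expect the pure-type steps, the adjunction steps, and the simulation of the $\mathrm{H}$-rules to be mechanical.
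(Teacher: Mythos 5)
Your proposal follows essentially the same route as the paper: induction on $\mathrm{H}$-derivations, with the translated axioms derived via the adjunction rules $ad_\mathsf{DK_I}$, $ad_\mathsf{DK_C}$, the multi-type structural rules ($IC$, $CI$, $\WDIA\BBOXR$, $cgri$, $\BBOXR\IOR$, $\BDIAL\XAND$, $ia3$/$pra$), and the operational rules, which is exactly how the paper's sample derivations of T6, T7 and T8 proceed. One small correction: the calculus has only the axiom $Id_\mathsf{D}$ for atomic propositions (there are no atomic kernel-type terms), so the identity sequents $\alpha\fCenter\alpha$ and $\xi\fCenter\xi$ must be obtained in the subsidiary induction from $\mathsf{D}$-type identities via the operational rules for $\bboxr$, $\bdial$, etc., rather than from separate identity axioms $Id_\mathsf{K_I}$, $Id_\mathsf{K_C}$.
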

We only show the derivations of axioms T6, T7 and rule T8.
\begin{itemize}
\item[T6.] $\top \vdash IA \xor \neg IA  \quad \rightsquigarrow \quad \xtop \vdash \wdia\bboxr A \xor \xneg\wdia\bboxr A  $
{\fns
\[
\AX $A  \fCenter  A$
\LeftLabel{\scriptsize $\bboxr$}
\UI $ \bboxr A  \fCenter  \BBOXR A$
\RightLabel{\scriptsize $\bboxr$}
\UI $ \bboxr A  \fCenter  \bboxr A$
\RightLabel{\scriptsize $\ITOP$}
\UI $ \bboxr A \IAND \ITOP \fCenter  \bboxr A$
\RightLabel{\scriptsize cgri}
\UI $\ITOP \fCenter  \tilde{\INEG}\bboxr A \IOR \bboxr A$
\LL{\scriptsize $\WDIA$}
\UI $\WDIA\ITOP \fCenter  \WDIA(\tilde{\INEG}\bboxr A \IOR \bboxr A)$
\dashedLine
\UIC{\fns $\WDIA$ + $ad_\mathsf{DKI}$ + $W$ + $C$ + $\BBOXL\BBOXR$ }
\dashedLine
\UI $\WDIA\ITOP \fCenter  \WDIA\tilde{\INEG}\bboxr A \XOR  \WDIA\bboxr A $
\RightLabel{\scriptsize $res_\mathsf{D}$}
\UI $  \WDIA\tilde{\INEG}\bboxr A \XCRARR \WDIA\ITOP \fCenter  \WDIA\bboxr A $
\RightLabel{\scriptsize $\wdia$}
\UI $  \WDIA\tilde{\INEG}\bboxr A \XCRARR \WDIA\ITOP \fCenter  \wdia\bboxr A $
\RightLabel{\scriptsize $res_\mathsf{D}$}
\UI $ \WDIA\ITOP \fCenter  \WDIA\tilde{\INEG}\bboxr A \XOR  \wdia\bboxr A$
\RightLabel{\scriptsize $E_\mathsf{D}$}
\UI $ \WDIA\ITOP \fCenter  \wdia\bboxr A \XOR \WDIA\tilde{\INEG}\bboxr A$
\RightLabel{\scriptsize $ \WDIA\ITOP$}
\UI $ \XTOP \fCenter  \wdia\bboxr A \XOR \WDIA\tilde{\INEG}\bboxr A$
\LeftLabel{\scriptsize $ \xtop$}
\UI $ \xtop \fCenter  \wdia\bboxr A \XOR \WDIA\tilde{\INEG}\bboxr A$
\RightLabel{\scriptsize $res_\mathsf{D}$}
\UI $\wdia\bboxr A \XCRARR \xtop \fCenter \WDIA\tilde{\INEG}\bboxr A $
\RightLabel{\scriptsize $ \WDIA\tilde{\INEG}$}
\UI $\wdia\bboxr A \XCRARR \xtop \fCenter \XNEG\WDIA\bboxr A $
\RightLabel{\scriptsize $res_\mathsf{D}$}
\UI $\xtop \fCenter \wdia\bboxr A \XOR \xneg\wdia\bboxr A $
\RightLabel{\scriptsize $ \xor$}
\UI $\xtop \fCenter \wdia\bboxr A \xor \xneg\wdia\bboxr A $
\DP
\]
}
\item[T7.] $I(A \xor B) \dashv\vdash IA \xor IB \quad \rightsquigarrow \quad \wdia\bboxr (A \xor B) \dashv\vdash \wdia\bboxr A \xor  \wdia\bboxr B$
{\fns
\begin{center}
\begin{tabular}{cc}
\AX$A \fCenter A$
\AX$B \fCenter B$
\LeftLabel{\scriptsize $ \xor$}
\BI$A \xor B \fCenter A \XOR B$
\LeftLabel{\scriptsize $\bboxr$}
\UI$\bboxr (A \xor B) \fCenter \BBOXR (A \XOR B)$
\RL{\fns $\BBOXR\IOR$}
\UI$\bboxr (A \xor B) \fCenter \BBOXR A \IOR \BBOXR B$
\RightLabel{\scriptsize $res_\mathsf{K_I}$}
\UI$\BBOXR A \ICRARR \bboxr (A \xor B) \fCenter \BBOXR B$
\RL{\scriptsize $\bboxr$}
\UI$\BBOXR A \ICRARR \bboxr (A \xor B) \fCenter \bboxr B$
\RightLabel{\scriptsize $res_\mathsf{K_I}$}
\UI$\bboxr (A \xor B) \fCenter \BBOXR A \IOR \bboxr B$
\RightLabel{\scriptsize $E_\mathsf{K_I}$}
\UI$\bboxr (A \xor B) \fCenter \bboxr B \IOR \BBOXR A$
\RightLabel{\scriptsize $res_\mathsf{K_I}$}
\UI$\bboxr B \ICRARR \bboxr (A \xor B) \fCenter \BBOXR A$
\RL{\scriptsize $\bboxr$}
\UI$\bboxr B \ICRARR \bboxr (A \xor B) \fCenter \bboxr A$
\RightLabel{\scriptsize $res_\mathsf{K_I}$}
\UI$\bboxr (A \xor B) \fCenter \bboxr B \IOR \bboxr A$
\RightLabel{\scriptsize $E_\mathsf{K_I}$}
\UI$\bboxr (A \xor B) \fCenter \bboxr A \IOR \bboxr B$
\LL{\scriptsize $\WDIA$}
\UI$\WDIA \bboxr (A \xor B) \fCenter \WDIA (\bboxr A \IOR \bboxr B)$
\dashedLine
\UIC{\fns $\WDIA$ + $ad_\mathsf{DKI}$ + $W$ + $C$ + $\BBOXL\BBOXR$ }
\dashedLine
\UI$\WDIA \bboxr (A \xor B) \fCenter \WDIA \bboxr A \XOR \WDIA \bboxr B$
\dashedLine
\UIC{\fns $res_\mathsf{D}$+ $\WDIA$+$E$ }
\dashedLine
\UI$\wdia \bboxr (A \xor B) \fCenter \wdia \bboxr A \xor \wdia \bboxr B$
\DP
 &
\AX$A \fCenter A$
\RightLabel{\scriptsize $W_\mathsf{D}$}
\UI$A \fCenter A \XOR B$
\RL{\scriptsize $ \xor$}
\UI$A \fCenter A \xor B$
\LeftLabel{\scriptsize $\bboxr$}
\UI$\bboxr A \fCenter \BBOXR (A \xor B)$
\RL{\scriptsize $\bboxr$}
\UI$\bboxr A \fCenter \bboxr (A \xor B)$
\LL{\scriptsize $\WDIA$}
\UI$\WDIA \bboxr A \fCenter \WDIA \bboxr (A \xor B)$
\LL{\scriptsize $\wdia$}
\UI$\wdia \bboxr A \fCenter \WDIA \bboxr (A \xor B)$
\RL{\scriptsize $\wdia$}
\UI$\wdia \bboxr A \fCenter \wdia \bboxr (A \xor B)$
\AX$B \fCenter B$
\RightLabel{\scriptsize $W_\mathsf{D}$}
\UI$B \fCenter B \XOR A$
\RightLabel{\scriptsize $E_\mathsf{D}$}
\UI$B \fCenter A \XOR B$
\RL{\scriptsize $ \xor$}
\UI$B \fCenter A \xor B$
\LeftLabel{\scriptsize $\bboxr$}
\UI$\bboxr B \fCenter \BBOXR (A \xor B)$
\RL{\scriptsize $\bboxr$}
\UI$\bboxr B \fCenter \bboxr (A \xor B)$
\LL{\scriptsize $\WDIA$}
\UI$\WDIA \bboxr B \fCenter \WDIA \bboxr (A \xor B)$
\LL{\scriptsize $\wdia$}
\UI$\wdia \bboxr B \fCenter \WDIA \bboxr (A \xor B)$
\RL{\scriptsize $\wdia$}
\UI$\wdia \bboxr B \fCenter \wdia \bboxr (A \xor B)$
\LL{\scriptsize $ \xor$}
\BI$\wdia \bboxr A \xor \wdia \bboxr B \fCenter \wdia \bboxr (A \xor B) \XOR \wdia \bboxr (A \xor B)$
\RightLabel{\scriptsize $C_\mathsf{D}$}
\UI$\wdia \bboxr A \xor \wdia \bboxr B \fCenter \wdia \bboxr (A \xor B)$
\DP
 \\
\end{tabular}
\end{center}
}
\newpage
\item[T8.] $IA\fCenter IB$ and $CA \fCenter CB$ imply $A \fCenter B \quad \rightsquigarrow \quad \wdia\bboxr A \fCenter \wdia\bboxr B$ and $\wbox\bdial A \fCenter \wbox\bdial B$ imply $A \fCenter B$
{\fns
\begin{center}
\begin{tabular}{c}
\AX$A \fCenter A$
\AX$\wbox \bdial A \fCenter \wbox \bdial B$
\dashedLine
\UIC{\fns $Id_{\mathrm{D}}$ + $\bdial$ + $\wbox$ + $Cut_{\mathrm{D}}$}
\dashedLine
\UI$\WBOX \BDIAL A \fCenter \WBOX \bdial B$
\LL{\fns $\WBOX$}
\UI$\BDIAL A \fCenter \bdial B$
\LL{\fns $ad_\mathsf{DK_C}$}
\UI$A \fCenter \WBOX \bdial B$
\RL{\fns $\wbox$}
\UI$A \fCenter \wbox \bdial B$
\RL{\fns $\xand$}
\BI$A \XAND A \fCenter A \xand \wbox \bdial B$
\LL{\fns $C_\mathsf{D}$}
\UI$A \fCenter A \xand \wbox \bdial B$

\AX$A \fCenter A$
\AX$B \fCenter B$
\RL{\fns $ia3$}
\BI$A \XAND \WBOX \BBOXL B \fCenter \WDIA \BBOXR A \XOR B$
\dashedLine
\UIC{\fns $\xand_l$ + $\xor_r$ }
\dashedLine
\UI$A \xand \wbox \bboxl B \fCenter \wdia \bboxr A \xor B$

\AX$\wdia \bboxr A \fCenter \wdia \bboxr B$
\dashedLine
\UIC{\fns $Id_{\mathrm{D}} + \bboxr+\wdia + Cut_{\mathrm{D}}$}
\dashedLine
\UI$\WDIA \bboxr A \fCenter \WDIA \BBOXR B$
\LL{\fns $\WDIA$}
\UI$\bboxr A \fCenter \BBOXR B$
\LL{\fns $ad_\mathsf{DK_I}$}
\UI$\WDIA \bboxr A \fCenter B$
\LL{\fns $\wdia$}
\UI$\wdia \bboxr A \fCenter B$
\AX$B \fCenter B$
\LL{\fns $\xor$}
\BI$\wdia \bboxr A \xor B \fCenter B \XOR B$
\RL{\fns $C_\mathsf{D}$}
\UI$\wdia \bboxr A \xor B \fCenter B$
\RL{\fns $Cut_{\mathsf{D}}$}
\BI$A \xand \wbox \bboxl B \fCenter B$
\RL{\fns $Cut_{\mathsf{D}}$}

\BI$A \fCenter B$
\DP
\end{tabular}
\end{center}
}
\end{itemize}
\subsection{Conservativity}\label{ssec: conservativity}
To argue that $\mathrm{D.A}$ is conservative w.r.t.~$\mathrm{H}$ we follow the standard proof strategy discussed in \cite{greco2016unified,GKPLori}. Let  $\vdash_{\mathrm{H}}$ denote the  syntactic consequence relation corresponding to $\mathrm {H}$ and $\models_{\mathbb{HA}}$ denote the  semantic consequence relation arising from (perfect) heterogeneous algebras in $\mathbb{HA}$. We need to show that, for all $\mathcal{L}$-formulas $A$ and $B$,  if $A^\tau \vdash B^\tau$ is a $\mathrm{D.A}$-derivable sequent, then  $A \vdash B$ is derivable in $\mathrm{H}$. This claim can be proved using  the following facts: (a) The rules of $\mathrm{D.A}$ are sound w.r.t.~perfect members of $\mathbb{HA}$ (cf.~Section \ref{ssec:soundness});  (b) $\mathrm{H}$ is complete w.r.t.~the class of perfect algebras in $\mathbb{A}$ (cf.~Proposition \ref{completeness:H.SDM});  (c) A perfect element of $\mathbb{A}$ is equivalently presented as  a perfect member of $\mathbb{HA}$ so that the semantic consequence relations arising from each type of structures preserve and reflect the translation (cf.~Proposition \ref{prop:consequence preserved and reflected}). Let $A, B$ be $\mathcal{L}$-formulas. If  $A^\tau \vdash B^\tau$ is $\mathrm{D.A}$-derivable, then by (a),  $\models_{\mathbb{HA}} A^\tau \vdash B^\tau$. By (c), this implies that $\models_{\mathbb{A}}  A\vdash B$, where $\models_{\mathbb{A}}$ denotes the semantic consequence relation arising from the perfect members of class $\mathbb{A}$. By (b), this implies that $A\vdash B$ is derivable in $\mathrm{H}$, as required.

\subsection{Cut elimination and subformula property}
 In the present section, we briefly sketch the proof of cut elimination and subformula property for $\mathrm{D.A}$. As hinted to earlier on, proper display calculi have been designed so that the cut elimination and subformula property  can  be inferred from a meta-theorem, following the strategy introduced by Belnap for display calculi. The meta-theorem to which we will appeal for each $\mathrm{D.A}$ was proved in \cite{TrendsXIII}.

All conditions in \cite[Theorem 4.1]{TrendsXIII} except $\mathrm{C}'_8$ are readily satisfied by inspecting the rules. Condition $\mathrm{C}'_8$ requires to check that reduction steps are available for every application of the cut rule in which both cut-formulas are principal, which either remove the original cut altogether or replace it by one or more cuts on formulas of strictly lower complexity.  In what follows, we only show  $\mathrm{C}'_8$ for the unary connectives.

\paragraph*{Pure $\mathsf{D}$-type connectives:}
{\fns
\begin{center}
\begin{tabular}{ccc}
\!\!\!\!\!
\bottomAlignProof
\AXC{\ \ \ $\vdots$ \raisebox{1mm}{$\pi_1$}}
\noLine
\UI$X \fCenter \XNEG A$
\RL{\fns $\xneg$}
\UI$X \fCenter \xneg A$

\AXC{\ \ \ $\vdots$ \raisebox{1mm}{$\pi_2$}}
\noLine
\UI$ \XNEG A \fCenter Y$
\LL{\fns $\xneg$}
\UI$\xneg A \fCenter Y$
\RL{\fns $Cut_\mathsf{D}$}
\BI$X \fCenter Y$
\DisplayProof

 & $\rightsquigarrow$ &

\!\!\!\!\!
\bottomAlignProof
\AXC{\ \ \ $\vdots$ \raisebox{1mm}{$\pi_2$}}
\noLine
\UI$\XNEG A \fCenter Y$
\LL{\fns $gal_\mathsf{D}$}
\UI$\XNEG Y \fCenter A$
\AXC{\ \ \ $\vdots$ \raisebox{1mm}{$\pi_1$}}
\noLine
\UI$X \fCenter \XNEG A$
\RL{\fns $gal_\mathsf{D}$}
\UI$A \fCenter \XNEG X$
\RL{\fns $Cut_\mathsf{D}$}
\BI$ \XNEG Y \fCenter \XNEG X$
\RL{\fns $cont$}
\UI$X \fCenter Y$
\DisplayProof
 \\
\end{tabular}
\end{center}
}
\noindent The cases for $\Ineg \alpha$ and $\Cneg\xi$ of D.TQBA5 and its extensions  are standard and similar to the one above.

\paragraph*{Multi-type connectives:}
{\fns
\begin{center}
\begin{tabular}{ccc}
\bottomAlignProof
\AXC{\ \ \ $\vdots$ \raisebox{1mm}{$\pi_1$}}
\noLine
\UI$\Gamma \fCenter \BBOXR A$
\RL{\fns $\bboxr$}
\UI$\Gamma \fCenter \bboxr A$
\AXC{\ \ \ $\vdots$ \raisebox{1mm}{$\pi_2$}}
\noLine
\UI$A \fCenter Y$
\LL{\fns $\xneg$}
\UI$\bboxr A \fCenter \BBOXR Y$
\BI$\Gamma \fCenter \BBOXR Y$
\DisplayProof

 & $\rightsquigarrow$ &

\!\!\!\!\!\!\!
\bottomAlignProof
\AXC{\ \ \ $\vdots$ \raisebox{1mm}{$\pi_1$}}
\noLine
\UI$\Gamma \fCenter  \BBOXR A$
\UI$\WDIA \Gamma \fCenter A$
\AXC{\ \ \ $\vdots$ \raisebox{1mm}{$\pi_2$}}
\noLine
\UI$A \fCenter Y$
\BI$\WDIA \Gamma \fCenter Y$
\UI$ \Gamma \fCenter \BBOXR Y$
\DisplayProof
 \\
\end{tabular}
\end{center}

\begin{center}
\begin{tabular}{ccc}
\bottomAlignProof
\AXC{\ \ \ $\vdots$ \raisebox{1mm}{$\pi_1$}}
\noLine
\UI$X \fCenter \WDIA \alpha$
\UI$X \fCenter \wdia \alpha$
\AXC{\ \ \ $\vdots$ \raisebox{1mm}{$\pi_2$}}
\noLine
\UI$\WDIA \alpha  \fCenter Y$
\UI$\wdia \alpha \fCenter Y$
\RL{\fns $Cut_\mathsf{D}$}
\BI$X \fCenter Y$
\DisplayProof

 & $\rightsquigarrow$ &

\!\!\!\!\!\!\!
\bottomAlignProof
\AXC{\ \ \ $\vdots$ \raisebox{1mm}{$\pi_1$}}
\noLine
\UI$X \fCenter \WDIA \alpha$
\RL{\fns $ad_\mathsf{DK_I}$}
\UI$\BBOXL X \fCenter \alpha$
\AXC{\ \ \ $\vdots$ \raisebox{1mm}{$\pi_2$}}
\noLine
\UI$\WDIA \alpha  \fCenter Y$
\LL{\fns $ad_\mathsf{DK_I}$}
\UI$ \alpha \fCenter \BBOXR Y$

\RL{\fns $Cut_\mathsf{K_I}$}
\BI$\BBOXL X \fCenter \BBOXR Y$
\RL{\fns $\BBOXL\BBOXR$}
\UI$X \fCenter Y$
\DisplayProof
\end{tabular}
\end{center}
}
\noindent The cases for $\bdial A$ and $\wbox \xi$ are analogous.

\bibliography{BIB}

\begin{thebibliography}{10}

\bibitem{banerjee1996rough}
Mohua Banerjee and Mihir~Kumar Chakraborty.
\newblock Rough sets through algebraic logic.
\newblock {\em Fundamenta Informaticae}, 28(3, 4):211--221, 1996.

\bibitem{BGPTW}
Marta B\'{i}lkov\'{a}, Giuseppe Greco, Alessandra Palmigiano, Apostolos
  Tzimoulis, and Nachoem~M. Wijnberg.
\newblock The logic of resources and capabilities.
\newblock {\em Review of Symbolic Logic}, 2018. Doi: 10.1017/S175502031700034X.

\bibitem{birkhoff1970heterogeneous}
Garrett Birkhoff and John~D. Lipson.
\newblock Heterogeneous algebras.
\newblock {\em Journal of Combinatorial Theory}, 8(1):115--133, 1970.

\bibitem{comer1995perfect}
Sthephen~D. Comer.
\newblock Perfect extensions of regular double stone algebras.
\newblock {\em Algebra Universalis}, 34(1):96--109, 1995.

\bibitem{davey2002introduction}
Brian~A. Davey and Hilary~A. Priestley.
\newblock {\em Introduction to lattices and order}.
\newblock Cambridge university press, 2002.

\bibitem{PDL}
Sabine Frittella, Giuseppe Greco, Alexander Kurz, and Alessandra Palmigiano.
\newblock Multi-type display calculus for propositional dynamic logic.
\newblock {\em Journal of Logic and Computation}, 26 (6):2067--2104, 2016.

\bibitem{TrendsXIII}
Sabine Frittella, Giuseppe Greco, Alexander Kurz, Alessandra Palmigiano, and
  Vlasta Sikimi\'{c}.
\newblock Multi-type sequent calculi.
\newblock {\em Proceedings Trends in Logic XIII, A. Indrzejczak, J. Kaczmarek,
  M. Zawidski eds}, 13:81--93, 2014.

\bibitem{Multitype}
Sabine Frittella, Giuseppe Greco, Alexander Kurz, Alessandra Palmigiano, and
  Vlasta Sikimi\'{c}.
\newblock A multi-type display calculus for dynamic epistemic logic.
\newblock {\em Journal of Logic and Computation}, 26 (6):2017--2065, 2016.

\bibitem{Inquisitive}
Sabine Frittella, Giuseppe Greco, Alessandra Palmigiano, and Fan Yang.
\newblock A multi-type calculus for inquisitive logic.
\newblock In {\em International Workshop on Logic, Language, Information, and
  Computation}, pages 215--233. Springer, 2016.

\bibitem{ProofTheoreticDEL}
Sabine Frittella, Kurz~Alexander Greco, Giuseppe, Alessandra Palmigiano, and
  Vlasta Sikimi\'{c}.
\newblock A proof-theoretic semantic analysis of dynamic epistemic logic.
\newblock {\em Journal of Logic and Computation}, 26(6):1961--2015, 2016.

\bibitem{GNV05}
Mai Gehrke, Hideo Nagahashi, and Yde Venema.
\newblock A {S}ahlqvist theorem for distributive modal logic.
\newblock {\em Annals of Pure and Applied Logic}, 131:65--102, 2005.

\bibitem{GKPLori}
Giuseppe Greco, Alexander Kurz, and Alessandra Palmigiano.
\newblock Dynamic epistemic logic displayed.
\newblock In Huaxin Huang, Davide Grossi, and Olivier Roy, editors, {\em
  Proceedings of the 4th International Workshop on Logic, Rationality and
  Interaction (LORI-4)}, volume 8196 of {\em LNCS}, 2013.

\bibitem{greco2017multi}
Giuseppe Greco, Fei Liang, M~Andrew Moshier, and Alessandra Palmigiano.
\newblock Multi-type display calculus for semi {D}e {M}organ logic.
\newblock In {\em International Workshop on Logic, Language, Information, and
  Computation}, pages 199--215. Springer, 2017.

\bibitem{bilattice}
Giuseppe Greco, Fei Liang, Alessandra Palmigiano, and Umberto Rivieccio.
\newblock {B}ilattice logic properly displayed.
\newblock {\em Fuzzy Sets and Systems}, doi: 10.1016/j.fss.2018.05.007, 2018.

\bibitem{greco2016unified}
Giuseppe Greco, Minghui Ma, Alessandra Palmigiano, Apostolos Tzimoulis, and
  Zhiguang Zhao.
\newblock Unified correspondence as a proof-theoretic tool.
\newblock {\em Journal of Logic and Computation}, 2016. doi:
  10.1093/logcom/exw022.

\bibitem{GrecoPalmigianoLatticeLogic}
Giuseppe Greco and Alessandra Palmigiano.
\newblock Lattice logic properly displayed.
\newblock In {\em International Workshop on Logic, Language, Information, and
  Computation}, pages 153--169. Springer, 2017.

\bibitem{linearlogPdisplayed}
Giuseppe Greco and Alessandra Palmigiano.
\newblock Linear logic properly displayed.
\newblock {\em Studia Logica, arXiv preprint: 1611.04184}, forthcoming.

\bibitem{iwinski1987algebraic}
Tadeusz~B. Iwinski.
\newblock Algebraic approach to rough sets.
\newblock {\em Bull. Pol. Acd. Sci., Math.}, 35:673--683, 1987.

\bibitem{fei2018}
Fei Liang.
\newblock {\em Multi-type {A}lgebraic {P}roof {T}heory}, PhD Dissertation,
  Delft University of Technology, 2018.

\bibitem{MinChaZhe18}
Minghui Ma, Mihir~Kumar Chakraborty, and Zhe Lin.
\newblock Sequent cacluli for varieties of topological quasi-{B}oolean
  algebras.
\newblock {\em Proceedings of IJCRS 2018: International Joint Conference on
  Rough Sets}, 2018, forthcoming.

\bibitem{KriPalNac18}
Krishna Manoorkar, Alessandra Palmigiano, and Nachoem~M. Wijnberg.
\newblock Rought concepts.
\newblock In preparation.

\bibitem{pawlak1982rough}
Zdzis{\l}aw Pawlak.
\newblock Rough sets.
\newblock {\em International journal of computer \& information sciences},
  11(5):341--356, 1982.

\bibitem{saha2014algebraic}
Anirban Saha, Jayanta Sen, and Mihir~Kumar Chakraborty.
\newblock Algebraic structures in the vicinity of pre-rough algebra and their
  logics.
\newblock {\em Information Sciences}, 282:296--320, 2014.

\bibitem{saha2016algebraic}
Anirban Saha, Jayanta Sen, and Mihir~Kumar Chakraborty.
\newblock Algebraic structures in the vicinity of pre-rough algebra and their
  logics ii.
\newblock {\em Information Sciences}, 333:44--60, 2016.

\bibitem{apostolos2018}
Apostolos Tzimoulis.
\newblock {\em Algebraic and {P}roof-{T}heoretic {F}oundations of the {L}ogics
  for {S}ocial {B}ehaviour}, PhD Dissertation, Delft University of Technology,
  2018.

\end{thebibliography}
\bibliographystyle{plain}
\end{document}